\documentclass{article}
\usepackage{amsfonts,amsmath,amssymb,amsthm}

\newtheorem{theorem}{Theorem}
\newtheorem{lemma}{Lemma}[section]
\newtheorem{corollary}[lemma]{Corollary}
\newtheorem{proposition}[lemma]{Proposition}
\newtheorem{remark}[lemma]{Remark}
\newtheorem{definition}[lemma]{Definition}


\def\R{\mathbb{R}}
\def\E{\mathcal E}

\def\H{\mathcal{H}}

\addtolength{\textwidth}{2cm} \addtolength{\oddsidemargin}{-1cm}
\addtolength{\evensidemargin}{-1cm}
\addtolength{\topmargin}{-.75cm}

\def\Z{\mathbb{Z}}

\def\H{\mathcal{H}}
\def\eps{\varepsilon}
\newcommand{\tr}{\operatorname{\text{tr}}}
\begin{document}

\title{Curvature flow in heterogeneous media}

\author{
Annalisa Cesaroni\footnote{
Dipartimento di Matematica Pura e Applicata,
Universit\`a di Padova,
via Trieste 63, 35121 Padova, Italy}  
\and Matteo Novaga\footnotemark[\value{footnote}]
\and Enrico Valdinoci\footnote{
Dipartimento di Matematica,
Universit\`a di Roma Tor Vergata,
Via della Ricerca Scientifica 1,
00133 Roma, Italy}}

\date{}

\maketitle

\begin{abstract}
\noindent 
In recent years, there has been a growing interest in geometric evolution in heterogeneous media. Here we consider curvature driven flows of planar curves, with an additional space-dependent forcing term. Motivated by a homogenization problem, we look for estimates which depend only on the $L^\infty$-norm of the forcing term. By means of an asymptotic analysis, we discuss the properties of the limit solutions of the homogenization problem, which we can rigorously solve in some special cases:
that is, when the initial curve is a graph, and the forcing term does not depend on the vertical direction. As a by-product, in such cases we are able to define a soluton of the geometric evolution when the forcing term is just a bounded, not 
necessarily continuous,  function. 
\end{abstract}

\section{Introduction}\label{secintro}

In this paper we consider the  curvature shortening flow of planar curves in a heterogeneous medium,
which is modeled by a spatially-dependent additive forcing term. The evolution law reads:
\begin{equation}\label{eqkg}
v = (\kappa + g)\nu,
\end{equation}
where $\nu$ is inward normal vector to the curve, $\kappa$ is the curvature of the curve,
$v$ is the normal velocity vector, and $g\in L^\infty(\R^2)$ represents the forcing term.

The original motivation for our analysis comes from a homogenization problem related to   
the averaged behaviour of an interface moving by curvature plus 
a rapidly oscillating forcing term. More precisely, the evolution law is given by 
\begin{equation}\label{eqkgomo}
v=\left(\kappa+   g\left(\frac{x}{\eps},\frac{y}{\eps} \right)\right)\nu,
\end{equation}
where  $g$ is a $1$-periodic Lipschitz continuous function.
 
When the forcing term is periodic, equation \eqref{eqkg} 
was recently considered in \cite{DKH:08}, where the authors 
prove existence and uniqueness of planar pulsating waves in every direction of propagation.
This result leads to the homogenization of \eqref{eqkgomo} for plane-like initial data (see Section \ref{secomo}). 
Related results on the homogenization of interfaces moving with normal velocity given by    
\[
v=\left(\eps \kappa+   g\left(\frac{x}{\eps},\frac{y}{\eps}\right)\right)\nu,
\] 
have been obtained  in \cite{CB:04} and \cite{LS:05}, 
under suitable assumptions on the forcing term 
including the fact that it does not change sign, 
and in \cite{CLS:09} under more general assumptions. 
In particular,  the authors show that the homogenized evolution law, when it exists,  
is  a  first order anisotropic geometric law of the form $v=\overline{c}(\nu)\,\nu$.  

Coming back to our problem, as a  first step we look for geometric estimates 
for solutions to \eqref{eqkg}, which depend 
only on the $L^\infty$-norm of $g$. In particular,
reasoning as in the case of the unperturbed curvature flow \cite{Hu:90, A:91},
in Section \ref{secex} we classify all possible singularities
which can arise during the evolution. As a consequence, in Section \ref{seciso}
we can show that, when $g$ is smooth and the initial curve is embedded, the existence
time of a regular solution to \eqref{eqkg} is bounded below by a quantity
depending only on $\| g\|_\infty$ and on the initial curve.
Unfortunately, since we have no estimates on the curvature in terms of $\| g\|_\infty$, 
we are not able to obtain a general existence result for \eqref{eqkg}
in the nonsmooth case, i.e. when $g\in L^\infty$.

However, in Section \ref{secgr} we overcome this difficulty by assuming that the initial
curve is the graph of a function $u$, for instance in the vertical direction.
In this case equation \eqref{eqkg} becomes
\begin{equation}\label{eqkggraph}
u_t = \frac{u_{xx}}{1+u_{x}^2} + g(x,u(x)) \sqrt{1+u_{x}^2}\,.
\end{equation}
In Lemma \ref{lemest} we establish an $L^p$-estimate
on $u_x$, which depend only on $\| g\|_\infty$.
In Proposition \ref{provo} we consider a sequence of smooth forcings
$g_n$ weakly converging to $g\in L^\infty$.
Using the estimate on $u_x$ and the results of the Section \ref{seciso}, and letting $u_n$ be
the solution corresponding to $g_n$, we can pass to the limit as $n\to\infty$ and obtain
that $u_n\to u\in H^1([0,T],L^2([0,1]))\cap L^\infty([0,T],H^1([0,1]))$, for some time $T>0$ 
depending only on $\| g\|_\infty$ and on the initial datum. 
When $g$ does not depend on $u$, we obtain a stronger estimate on $\| u_t\|_\infty$, 
which allows us to show that $u\in W^{1,\infty}([0,T],L^\infty([0,1]))\cap L^\infty([0,T],W^{2,\infty}([0,1]))$.

As a first application, this leads to an existence and uniqueness result for solutions to \eqref{eqkggraph},
when $g$ is a $L^\infty$-function which is independent of $u$ (see Theorems \ref{teogi} and \ref{teocfr}).
 

The second application of our result is to the homogenization problem \eqref{eqkgomo}. 
In section \ref{secomo},
under the assumptions of Theorem \ref{teogi}, that is, when the curve is a graph
and $g$ is independent of the vertical direction, we can pass to the limit in \eqref{eqkgomo}
as $\eps\to 0$, and show that the limit curve moves according to the evolution law  
\begin{equation}\label{eqkgeff}
v = \left(\kappa + \int_{[0,1]^2} g( x,y)d  x dy  \right)\nu.
\end{equation}

In Section \ref{secomo2}, by means of a formal asymptotic analysis, we discuss the limit behavior 
of \eqref{eqkgomo} in the general case.
In particular, we show that the solutions are expected to converge, in the viscosity sense, to a solution of
the geometric equation
\begin{equation}\label{eqkglim}
v = \left(\kappa + \overline{c}(\nu)\right)\nu,
\end{equation}
where the function $\overline{c}\in L^\infty(S^1)$ is generally {\em discontinuous}. The main obstacle to a rigorous
analysis of \eqref{eqkgomo}, for instance  using the level set method along the lines of \cite{e}, \cite{LS:05}, 
is due to the fact that a viscosity theory for \eqref{eqkglim} is presently not available.

\paragraph{Acknowledgements.}
The second author wish to thank the University of Tours
and the Research Institute {\em le Studium} for the kind hospitality and support.

\section{Local existence of solutions}\label{secex}

In this section we are concerned with the local existence 
for \eqref{eqkg}, under the assumption that the forcing term  
$g$ is smooth and bounded, i.e. $g\in C^{\infty}(\R^2)\cap L^\infty(\R^2)$.
If we parametrize counterclockwise the evolving curve with a function $\gamma:[0,1]\times [0,T]\to \R^2$,
$\gamma=(\gamma^1,\gamma^2)$, problem \eqref{eqkg} becomes
\begin{equation}\label{eqpar}
\gamma_t = (\kappa+g)\nu
= \frac{\gamma_{xx}^\perp}{|\gamma_x|^2} + g(\gamma)\frac{(-\gamma^2_x,\gamma_x^1)}{|\gamma_x|},
\end{equation}
where $\xi^\perp$ denotes the component of the vector $\xi$ orthogonal to $\gamma$.
As usual we let $\tau, \nu, \kappa$ be respectively the unit tangent vector, the unit normal vector  
and the curvature of the evolving curve.
Denoting by $s$ the arclength paramter of the curve, so that $\partial_s=\partial_x/|\gamma_x|$,
by the classical Frenet--Serret formulas we have 
\begin{equation}\label{6bis}
\gamma_s = \tau, \qquad \gamma_{ss} = \tau_s = \kappa\,\nu, \qquad \nu_s = -\kappa\,\tau.
\end{equation}
Following \cite{HP:99}, we give a local in time existence result for \eqref{eqpar}.

\begin{theorem}\label{smallex}
Let $\gamma_0:[0,1]\to \R^2$ be a smooth map such that $|\gamma_0'(x)|>0$ for all $x\in [0,1]$,
then there exist $T>0$ and  a smooth solution to \eqref{eqpar}, 
defined on $[0,1]\times [0,T]$, such that $\gamma(x,0)=\gamma_0(x)$ for all $x\in [0,1]$.
\end{theorem}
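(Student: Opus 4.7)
The right-hand side of \eqref{eqpar} is only weakly parabolic, since reparametrization invariance of the flow makes the tangential direction degenerate. My plan, following \cite{HP:99}, is the classical DeTurck-type trick. I would first consider the auxiliary system
\begin{equation*}
\tilde\gamma_t = \frac{\tilde\gamma_{xx}}{|\tilde\gamma_x|^2} + g(\tilde\gamma)\,\frac{(-\tilde\gamma_x^2,\tilde\gamma_x^1)}{|\tilde\gamma_x|},
\qquad \tilde\gamma(\cdot,0)=\gamma_0,
\end{equation*}
in which the perpendicular projection $\gamma_{xx}^\perp$ has been replaced by the full second derivative $\gamma_{xx}$. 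Since $|\gamma_0'|>0$ on the compact interval $[0,1]$, the principal part is uniformly elliptic at $t=0$, and by continuity it remains so on a short time interval.

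Next, I would establish short-time existence of a smooth solution to this strictly parabolic quasi-linear system by classical H\"older-space parabolic theory. Concretely, I would freeze the coefficients $|\eta_x|^{-2}$ and $g(\eta)$ along a given $\eta$, solve the resulting linear parabolic equation for $\tilde\gamma$ using standard Schauder estimates, and run a fixed-point argument in a suitable closed H\"older ball on $[0,1]\times[0,T]$ for $T>0$ small, depending only on $\gamma_0$ and $\|g\|_\infty$. Smoothness of $\gamma_0$ and $g$ then upgrades the fixed point to a $C^\infty$ solution by a standard bootstrap.

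Finally, I would recover a solution of the original geometric equation \eqref{eqpar} via a time-dependent reparametrization. The difference between the right-hand side of \eqref{eqpar} and that of the auxiliary system is a purely tangential vector field of the form $h(\tilde\gamma,\tilde\gamma_x)\,\tau$; choosing a family of diffeomorphisms $\varphi(\cdot,t)$ of $[0,1]$ by solving the transport ODE $\varphi_t=-h/|\tilde\gamma_x|$ with $\varphi(\cdot,0)=\mathrm{Id}$, the composition $\gamma(x,t):=\tilde\gamma(\varphi(x,t),t)$ satisfies \eqref{eqpar} and inherits $|\gamma_x|>0$ throughout $[0,1]\times[0,T]$. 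The main obstacle is precisely this tangential degeneracy: one cannot invoke a standard parabolic existence theorem directly for \eqref{eqpar}, and the reparametrization step is what allows the passage from the strictly parabolic auxiliary problem back to the genuinely geometric flow.
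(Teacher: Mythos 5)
Your proof is essentially correct but takes a genuinely different route from the one in the paper. The paper breaks the tangential degeneracy of \eqref{eqpar} by writing the evolving curve as a normal graph over the initial one, $\gamma(x,t)=\gamma_0(x)+f(x,t)\nu_0(x)$; this collapses the $2\times 2$ degenerate system to the single scalar quasilinear equation \eqref{eqgraf}, which is uniformly parabolic as long as $f$ and $f_x$ stay controlled, and then invokes standard scalar parabolic/semigroup theory (\cite{LSU,Lu:95}). You instead use the DeTurck-type gauge fixing: replace $\gamma_{xx}^\perp$ by the full $\gamma_{xx}$ to obtain a diagonally strictly parabolic system, solve it, and pull back by a time-dependent reparametrization. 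Both devices kill the same degeneracy; the paper's reduction has the advantage of landing on a scalar equation (so one can quote off-the-shelf quasilinear parabolic results without worrying about the right notion of parabolicity for systems), while yours keeps the system structure and makes the reparametrization step explicit, which is a cleaner way to see that one really recovers a solution of \eqref{eqpar} with zero tangential velocity. Two small inaccuracies in your write-up: the tangential discrepancy between the two right-hand sides is $(\tilde\gamma_{xx}\cdot\tilde\tau)\,\tilde\tau/|\tilde\gamma_x|^2$, so $h$ depends on $\tilde\gamma_{xx}$ as well, not only on $(\tilde\gamma,\tilde\gamma_x)$ as written (this does not affect the ODE argument since $\tilde\gamma$ is already smooth); and the existence time produced by a Schauder fixed-point argument at this stage depends on H\"older norms of the coefficients, hence on higher derivatives of $g$, not merely on $\|g\|_\infty$ — the dependence only on $\|g\|_\infty$ is a nontrivial refinement obtained later in the paper (Theorem \ref{teoemb}), and is neither needed for nor delivered by the short-time existence theorem.
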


\begin{proof}
The proof is standard and we only sketch it. If we write $\gamma([0,1],t)$ 
as graph of a function $f(x,t)$ over the initial curve $\gamma_0([0,1])$, so that
\[
\gamma(x,t)=\gamma_0(x)+f(x,t)\nu_0(x),
\]
equation \eqref{eqpar} becomes
\begin{eqnarray}\label{eqgraf}
f_t &=& \frac{\gamma_{xx}\cdot \nu_0}{|\gamma_x|^2}-g(\gamma) (\nu\cdot\nu_0)
\\ \nonumber
&=& \frac{f_{xx}+\kappa_0(1- \kappa_0f)|\gamma'_0|^2}{|f_x|^2+(1-\kappa_0 f)^2|\gamma'_0|^2} -
\frac{(1-\kappa_0 f)|\gamma'_0|\,g(\gamma_0(x)+f(x,t)\nu_0(x)) }{\sqrt{|f_x|^2+(1-\kappa_0 f)^2|\gamma'_0|^2}}.
\end{eqnarray}
Since \eqref{eqgraf} is a uniformly parabolic quasilinear equation, 
the thesis follows by standard semigroup techniques, see 
for instance \cite{LSU,Lu:95}.
\end{proof}
%
\subsection{Estimates on the curvature and its derivatives} 

\begin{lemma}
The following commutation rule holds:
\begin{equation}\label{eqcomm}
\partial_t \partial_s = \partial_s\partial_t +\kappa(\kappa+g)\partial_s.
\end{equation}
Moreover,
\begin{eqnarray}
\label{eqkg5}
\tau_t &=& (\kappa + g)_s\nu  ,
\\ \label{eqnu}
\nu_t &=& - (\kappa+g)_s\tau 
\\ \label{eqkappa}
\kappa_t &=& (\kappa+g)_{ss} + \kappa^2(\kappa+g).
\end{eqnarray}\end{lemma}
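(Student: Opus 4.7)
The plan is to derive everything from the evolution of the metric factor $|\gamma_x|$, together with the Frenet--Serret relations \eqref{6bis}. First I would compute $\partial_t|\gamma_x|$ by differentiating $|\gamma_x|^2=\gamma_x\cdot\gamma_x$ in time, substituting $\gamma_t=(\kappa+g)\nu$, and using that $\gamma_x=|\gamma_x|\tau$ is orthogonal to $\nu$ and that $\nu_x=|\gamma_x|\nu_s=-\kappa|\gamma_x|\tau$. This yields
\[
\partial_t|\gamma_x|=-\kappa(\kappa+g)\,|\gamma_x|.
\]
Since $\partial_s=|\gamma_x|^{-1}\partial_x$, a direct calculation of $\partial_t\partial_s$ from this relation gives the commutation rule \eqref{eqcomm}.

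Next, to get \eqref{eqkg5} I would write $\tau=\partial_s\gamma$ and apply the commutation rule:
\[
\tau_t=\partial_t\partial_s\gamma=\partial_s\partial_t\gamma+\kappa(\kappa+g)\partial_s\gamma
=\partial_s\big((\kappa+g)\nu\big)+\kappa(\kappa+g)\tau.
\]
Expanding with $\nu_s=-\kappa\tau$ from \eqref{6bis}, the two tangential contributions $-\kappa(\kappa+g)\tau$ and $\kappa(\kappa+g)\tau$ cancel, leaving $\tau_t=(\kappa+g)_s\nu$. Formula \eqref{eqnu} then follows immediately from differentiating the identities $|\nu|^2=1$ and $\nu\cdot\tau=0$ in time: the first forces $\nu_t\perp\nu$, and the second gives $\nu_t\cdot\tau=-\nu\cdot\tau_t=-(\kappa+g)_s$.

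Finally, for the curvature equation \eqref{eqkappa} I would differentiate the identity $\tau_s=\kappa\nu$ in $t$. On one side, $\partial_t(\kappa\nu)=\kappa_t\nu+\kappa\nu_t=\kappa_t\nu-\kappa(\kappa+g)_s\tau$ by \eqref{eqnu}. On the other side, applying \eqref{eqcomm} to $\tau$ and then using \eqref{eqkg5},
\[
\partial_t\tau_s=\partial_s\tau_t+\kappa(\kappa+g)\tau_s
=(\kappa+g)_{ss}\nu+(\kappa+g)_s\nu_s+\kappa^2(\kappa+g)\nu.
\]
Matching normal components (the tangential parts are automatically consistent) and using $\nu_s=-\kappa\tau$ gives $\kappa_t=(\kappa+g)_{ss}+\kappa^2(\kappa+g)$. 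The only mildly delicate point is the bookkeeping of signs and of which quantity is being differentiated with respect to the moving arclength $s$ versus the fixed parameter $x$; the commutation rule \eqref{eqcomm} is precisely what makes this bookkeeping clean, so once it is established the remaining identities follow by routine substitution.
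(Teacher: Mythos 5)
Your proposal is correct and follows essentially the same route as the paper: the paper obtains \eqref{eqcomm} by computing $\partial_t\partial_s-\partial_s\partial_t$ directly (which amounts to the same evaluation of $\partial_t|\gamma_x|$ you carry out first, cf.\ also \eqref{eqw}), and then derives \eqref{eqkg5}, \eqref{eqnu}, \eqref{eqkappa} by the identical applications of the commutation rule and the Frenet--Serret relations. No gaps.
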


\begin{proof} By definition of
arclength, we have $$ \partial_s = \frac{\partial_x }{|\gamma_x|}.$$
Therefore, from \eqref{eqpar} and \eqref{6bis},
\begin{eqnarray*}
\partial_t\partial_s-\partial_s\partial_t &=&
-|\gamma_x|^{-3} \gamma_x \cdot\gamma_{xt}\partial_x
\\
&=& -|\gamma_x|^{-2} \tau \cdot\gamma_{xt}\partial_x
\\
&=&
-|\gamma_x|\tau\cdot \big((\kappa+g)\nu\big)_s\partial_x
\\ 
&=&
-|\gamma_x|\tau\cdot (\kappa+g)\nu_s
\partial_x
\\
&=& 
\kappa (\kappa+g)\partial_s, 
\end{eqnarray*} that is \eqref{eqcomm}.
Now, applying \eqref{eqcomm} to \eqref{eqpar}
and \eqref{6bis}, we obtain
\begin{eqnarray*}
\tau_t &=& (\gamma_s)_t= (\gamma_t)_s+\kappa(\kappa+g)\gamma_s\\
&=& (\kappa+g)_s \nu+(\kappa+g)\nu_s+\kappa(\kappa+g)\tau=
(\kappa+g)_s\nu,
\end{eqnarray*}
which is \eqref{eqkg5}.

Also, since $|\nu|=1$,
$$ 0=\frac{(\nu\cdot \nu)_t}{2}= \nu\cdot \nu_t$$
and so, from \eqref{eqkg5},
\begin{eqnarray*}
\nu_t &=&(\nu_t\cdot \nu)\nu+(\nu_t\cdot \tau)\tau=
(\nu_t\cdot \tau)\tau\\
&=& \big(
(\nu\cdot\tau)_t-\nu\cdot\tau_t
\big)\tau=-(\nu\cdot\tau_t)\tau=-(\kappa+g)_s\tau,
\end{eqnarray*}
that is \eqref{eqnu}, and
\begin{eqnarray*}
\kappa_t &=& (\kappa\nu)_t\cdot \nu=
(\tau_s)_t\cdot\nu=(\tau_t)_s\cdot\nu+
\kappa(\kappa+g)\tau_s\cdot\nu
\\ 
&=&
\big( (\kappa+g)_s\nu\big)_s\cdot\nu+\kappa^2(\kappa+g)
=(\kappa+g)_{ss}+\kappa^2(\kappa+g)
,\end{eqnarray*} 
that is \eqref{eqkappa}.\end{proof}

Let us compute the evolution for the spatial derivaties of the curvature.
We denote by $p_{j,k}(\partial_s^\ell\kappa,\partial^m_s g)$ 
a generic polynomial depending on the derivatives 
up to order $j$ of $\kappa$ and the derivatives up to order $k$ of $g$.  

\begin{lemma}
For all $j\in\mathbb N$, $j\ge1$, it holds
\begin{equation}\label{eqkj}
\partial_t \partial_s^j\kappa = (\partial_s^j\kappa)_{ss} 
+ \left( (j+3)\kappa^2 + (j+2)\kappa g\right)\partial_s^j\kappa 
+p_{j-1,j+2}(\partial_s^\ell\kappa,\partial^m_s g).
\end{equation}
\end{lemma}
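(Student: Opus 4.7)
The proof is a natural induction on $j$, using the commutation rule \eqref{eqcomm} and the evolution equation \eqref{eqkappa} for the curvature as the base case.

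For the base case $j=1$, the plan is to apply the commutation rule \eqref{eqcomm} to $\kappa$, writing
\[
\partial_t\kappa_s \;=\; \partial_s\kappa_t + \kappa(\kappa+g)\kappa_s,
\]
then substitute the expression for $\kappa_t$ given by \eqref{eqkappa} and differentiate. The contribution $\partial_s(\kappa_{ss}+g_{ss}) = (\kappa_s)_{ss} + g_{sss}$ produces the leading parabolic term (up to the $g_{sss}$ piece, which is absorbed into $p_{0,3}$), while $\partial_s(\kappa^3+\kappa^2 g) = 3\kappa^2\kappa_s + 2\kappa g\,\kappa_s + \kappa^2 g_s$ combines with $\kappa(\kappa+g)\kappa_s$ to give exactly $(4\kappa^2+3\kappa g)\kappa_s$, modulo the term $\kappa^2 g_s$ which again belongs to $p_{0,3}$. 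This matches \eqref{eqkj} for $j=1$.

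For the inductive step, assuming the formula at level $j-1$, I would write $\partial_s^j\kappa = \partial_s(\partial_s^{j-1}\kappa)$ and apply the commutation rule:
\[
\partial_t\partial_s^j\kappa \;=\; \partial_s\bigl(\partial_t\partial_s^{j-1}\kappa\bigr) + \kappa(\kappa+g)\,\partial_s^j\kappa.
\]
Substituting the inductive hypothesis into the first term on the right, the leading piece $\partial_s(\partial_s^{j-1}\kappa)_{ss}$ becomes $(\partial_s^j\kappa)_{ss}$, and the coefficient term $\partial_s\bigl[((j+2)\kappa^2+(j+1)\kappa g)\,\partial_s^{j-1}\kappa\bigr]$ splits into $((j+2)\kappa^2+(j+1)\kappa g)\,\partial_s^j\kappa$ plus a factor $\partial_s((j+2)\kappa^2+(j+1)\kappa g)\,\partial_s^{j-1}\kappa$, which only involves derivatives of $\kappa$ up to order $j-1$ and of $g$ up to order $1$, so it belongs to $p_{j-1,j+2}$. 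Finally, adding the commutator contribution $\kappa(\kappa+g)\,\partial_s^j\kappa$ upgrades the coefficient from $(j+2)\kappa^2+(j+1)\kappa g$ to $(j+3)\kappa^2+(j+2)\kappa g$, which is precisely what \eqref{eqkj} predicts.

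The only real care needed is the bookkeeping of the ``remainder'' polynomial, namely checking that $\partial_s$ applied to $p_{j-2,j+1}(\partial_s^\ell\kappa,\partial_s^m g)$ produces an object of type $p_{j-1,j+2}$: since each $s$-derivative raises by one the maximal order of $\kappa$-derivatives and of $g$-derivatives appearing, this is immediate from the very definition of the symbol $p_{j,k}$. I expect no genuine obstacle here; the main thing to watch is that the contribution from differentiating the coefficient $(j+2)\kappa^2+(j+1)\kappa g$ really does land in $p_{j-1,j+2}$ and not in a polynomial depending on $\partial_s^j\kappa$, which is guaranteed because this coefficient involves $\kappa$ and $g$ only, so one $s$-derivative introduces at most $\kappa_s$ and $g_s$ (orders $1\le j-1$ and $1\le j+2$ for $j\ge 2$).
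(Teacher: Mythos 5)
Your proposal is correct and follows essentially the same route as the paper: induction on $j$, with the base case $j=1$ obtained by combining the commutation rule \eqref{eqcomm} with the curvature evolution \eqref{eqkappa}, and the inductive step obtained by applying $\partial_s$ to the hypothesis and adding the commutator contribution $\kappa(\kappa+g)\partial_s^j\kappa$ to upgrade the coefficient. Your explicit bookkeeping of the remainder polynomial (checking that the derivative of the coefficient and of $p_{j-2,j+1}$ land in $p_{j-1,j+2}$) is the same observation the paper leaves implicit.
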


\begin{proof}
The proof is by induction on $j$. When $j=1$ from 
\eqref{eqcomm} and \eqref{eqkappa} we easily get
\[
\partial_t \kappa_s = (\kappa_s)_{ss} + (4\kappa^2+3\kappa g)\kappa_s + (\kappa^2g_s+g_{sss}).
\]
Assume now \eqref{eqkj} for some $j\in\mathbb N$. Using \eqref{eqcomm}, we compute recursively
\begin{eqnarray*}
\partial_t \partial_s^{j+1}\kappa &=& \partial_s \partial_t \partial_s^j\kappa + \kappa(\kappa+g)\partial_s^{j+1}\kappa
\\
&=& (\partial_s^{j+1}\kappa)_{ss} + 
\left((j+3)\kappa^2 + (j+2)\kappa g + \kappa(\kappa+g)\right)\partial_s^{j+1}\kappa 
+ p_{j,j+3}(\partial_s^\ell\kappa,\partial^m_s g),
\end{eqnarray*} 
which gives \eqref{eqkj} for all $j$.
\end{proof}

We now compute the evolution equation of $w:=\log |\gamma_x|$.

\begin{lemma}
There holds
\begin{equation}\label{eqw}
w_t = -\kappa(\kappa+g).
\end{equation}
\end{lemma}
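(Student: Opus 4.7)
The plan is to compute $w_t$ directly by differentiating $\log|\gamma_x|$ in time and then using the evolution equation \eqref{eqpar} together with the Frenet--Serret identities \eqref{6bis} that were already established in the excerpt. Since $|\gamma_x|^2 = \gamma_x \cdot \gamma_x$, differentiation in $t$ gives
\[
w_t \;=\; \partial_t \log |\gamma_x| \;=\; \frac{\gamma_x \cdot \gamma_{xt}}{|\gamma_x|^2}.
\]
So the task reduces to identifying $\gamma_x \cdot \gamma_{xt}$.

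First I would exchange the order of differentiation and write $\gamma_{xt} = \partial_x(\gamma_t) = \partial_x\bigl((\kappa+g)\nu\bigr)$, then convert $\partial_x$ into $|\gamma_x|\partial_s$ to exploit the arclength formulas. This yields
\[
\gamma_{xt} \;=\; |\gamma_x|\Bigl((\kappa+g)_s\,\nu \;+\; (\kappa+g)\,\nu_s\Bigr).
\]
Taking the dot product with $\gamma_x = |\gamma_x|\tau$ and using $\tau\cdot\nu=0$ together with $\nu_s=-\kappa\tau$ from \eqref{6bis}, only the second term survives, and it produces
\[
\gamma_x\cdot\gamma_{xt} \;=\; |\gamma_x|^2(\kappa+g)\,\tau\cdot\nu_s \;=\; -|\gamma_x|^2\,\kappa(\kappa+g).
\]
Dividing by $|\gamma_x|^2$ gives exactly \eqref{eqw}.

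There is really no obstacle here: the computation is a two-line consequence of \eqref{eqpar} and \eqref{6bis}, entirely parallel to the analogous identity for the unforced curve shortening flow. The only small care needed is to keep track of where $\partial_x$ rather than $\partial_s$ is the natural derivative (because $|\gamma_x|$ itself is being differentiated), which is why one introduces the factor $|\gamma_x|$ when swapping $\partial_x$ for $\partial_s$ in $\gamma_{xt}$. Once that bookkeeping is done, the forcing term $g$ enters symmetrically with $\kappa$ and the formula drops out immediately.
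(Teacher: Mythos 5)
Your proof is correct and takes essentially the same route as the paper: both differentiate $w=\log|\gamma_x|$ to get $w_t = \gamma_x\cdot\gamma_{xt}/|\gamma_x|^2$, rewrite this as $\tau\cdot\partial_s\gamma_t$, and then use $\gamma_t=(\kappa+g)\nu$ together with $\nu_s=-\kappa\tau$ and $\tau\cdot\nu=0$. You have simply spelled out the intermediate step of expanding $\gamma_{xt}=|\gamma_x|\partial_s\gamma_t$, which the paper leaves implicit.
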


\begin{proof}
A direct computation using \eqref{eqpar} gives
\begin{equation*}
w_t = \frac{\gamma_x\cdot\gamma_{xt}}{|\gamma_x|^2} = \tau\cdot (\partial_s\gamma_t) = -\kappa(\kappa+g).
\qedhere\end{equation*}
\end{proof}

\begin{lemma}\label{stimecurvature}
Assume that \eqref{eqpar} admits a smooth solution on $[0,\bar t]$, with $\bar t>0$. Then
\[
\max_{[0,1]\times [0,\bar t]}(\partial_s^j \kappa)^2 \le C_j
\]
for all $j\in\mathbb N$, where the constants $C_j$ depend only on $\bar t$,  
on $\max_{[0,1]\times [0,\bar t]}\kappa^2$ and $\|g\|_{C^{j+2}}$.
\end{lemma}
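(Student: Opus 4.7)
The plan is to argue by induction on $j$. The base case $j=0$ is immediate from the hypothesis. Given bounds $\|\partial_s^i\kappa\|_\infty\le C_i$ for $i<j$, I would apply the evolution equation \eqref{eqkj}. The first task is to unpack the polynomial $p_{j-1,j+2}$: by the chain rule and the Frenet formulas \eqref{6bis}, each arclength derivative $\partial_s^m g$ expands as a polynomial in spatial derivatives of $g$ up to order $m$, in the unit vectors $\tau,\nu$, and in $\partial_s^i\kappa$ for $i\le m-2$. A short auxiliary induction on $m$ shows that $\partial_s^{j+2}g$ contains $\partial_s^j\kappa$ only linearly (arising by repeatedly differentiating the single $\kappa$ factor produced by $\partial_s\tau=\kappa\nu$), with coefficient of the form $\nabla g\cdot\nu$ up to a combinatorial constant, and moreover that $\partial_s^{j+2}g$ itself enters $p_{j-1,j+2}$ only to the first power. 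All remaining ingredients, namely $\partial_s^m g$ for $m\le j+1$ and $\partial_s^\ell\kappa$ for $\ell\le j-1$, are controlled by the inductive hypothesis and by $\|g\|_{C^{j+2}}$.

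Setting $u:=\partial_s^j\kappa$, this reorganizes \eqref{eqkj} into a linear parabolic equation
\[
u_t \;=\; u_{ss} + A\,u + B,
\]
with $\|A\|_\infty$ controlled by $\max\kappa^2$ and $\|g\|_{C^1}$, and $\|B\|_\infty$ controlled by $\|g\|_{C^{j+2}}$ and the previously established constants $C_0,\dots,C_{j-1}$. Then $v:=u^2$ satisfies
\[
v_t - v_{ss} \;=\; -2(u_s)^2 + 2Av + 2uB \;\le\; (2\|A\|_\infty+1)\,v + \|B\|_\infty^2.
\]
Since $v(\cdot,t)$ is periodic in the parameter $x$, it attains its spatial maximum at a point where $v_{ss}\le 0$; hence $M(t):=\max_x v(x,t)$ satisfies, in the usual ODE-comparison sense,
\[
M'(t) \;\le\; (2\|A\|_\infty+1)\,M(t) + \|B\|_\infty^2.
\]
Gr\"onwall's inequality then bounds $M(t)$ on $[0,\bar t]$ by a constant depending only on $\bar t$, $\max\kappa^2$, $\|g\|_{C^{j+2}}$, and $M(0)=\max_x(\partial_s^j\kappa(\cdot,0))^2$, which is finite because $\gamma_0$ is smooth.

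The main technical obstacle is the combinatorial bookkeeping described in the first paragraph: one must verify that no product $(\partial_s^j\kappa)^k$ with $k\ge 2$ can appear after fully expanding the right-hand side of \eqref{eqkj}, so that the resulting equation is genuinely affine in the top-order quantity $u$. Once that structural statement is secured, the induction closes by a routine maximum-principle plus Gr\"onwall argument, and iterating through $j$ produces the claimed constants $C_j$.
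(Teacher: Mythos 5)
Your proof is correct and follows essentially the same route as the paper: induct on $j$, track $M_j(t)=\max_x(\partial_s^j\kappa)^2$, use that $v_{ss}\le 0$ at a spatial maximum of the periodic curve to drop the Laplacian term, and close via the evolution equation \eqref{eqkj} and Gr\"onwall. The one refinement over the paper's terse sketch is that you explicitly identify the hidden top-order term — the single linear occurrence of $\partial_s^j\kappa$ inside $\partial_s^{j+2}g$, with bounded coefficient $\nabla g\cdot\nu$ — and verify that the equation is genuinely affine in $u=\partial_s^j\kappa$; the paper absorbs this silently into its constants $A_j$, $B_j$ without remark.
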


\begin{proof}
Following~\cite{GH}, we let
\[
K_j(x,t):= (\partial^j_s \kappa(x,t))^2 \qquad \qquad  
M_j(t):=\max_{x\in [0,1]}K_j(x,t).
\]
For all $\bar x$ such that $K_j(\bar x,t)=M_j(t)$
we have 
\begin{eqnarray*}
\partial_s K_j &=& 0
\\
\partial_{ss} K_j &=&  2\Big( (\partial^j_s \kappa)_s^2 + \partial^j_s \kappa
(\partial^j_s \kappa)_{ss} \Big) \le 0
\end{eqnarray*}
and so
\begin{equation}\label{HT1}
\partial^j_s \kappa(\bar x,t)
(\partial^j_s \kappa)_{ss}(\bar x,t) \le 0.
\end{equation}
Recalling \eqref{eqkj}, for a.e. $t\in [0,\bar t]$ we have
\begin{eqnarray*}
\dot M_j(t) &=& \max_{\bar x:\ K_j(\bar x,t)=M_j(t)}
\partial_t K_j
\\ 
&=& \max_{\bar x:\ K_j(\bar x,t)=M_j(t)}
2\partial^j_s \kappa (\partial^j_s \kappa)_t
\\
&=& \max_{\bar x:\ K_j(\bar x,t)=M_j(t)} 2\partial^j_s \kappa
\left((\partial_s^j\kappa)_{ss} 
+ A_j\partial_s^j\kappa +B_j\right)
\end{eqnarray*}
where the constants $A_j$, $B_j$ depend on $M_\ell$ and $\|g\|_{C^k}$, with $\ell<j$ and $k\le j+2$.
Hence, using \eqref{HT1} we get
\[
\dot M_j \le 2 A_j M_j + 2 B_j. 
\]
By Gronwall's Lemma it then follows that the quantities $M_j$ are uniformly bounded on $[0,\bar t]$.
\end{proof}

Since the existence result in Theorem \ref{smallex} is first established in the usual H\"older parabolic spaces 
$C^{k+\alpha,2(k+\alpha)}([0,1]\times [0,T])$ (see \cite{Lu:95}), 
if we still denote by $T$ the maximal existence time of the evolution, we have that,
if $T<+\infty$, 
either $|\gamma_x|^{-1}$ or $|\partial^j_s\kappa|$ blow up as $t\to T$, for some $j\in\mathbb N$.

\begin{proposition}\label{propex}
Let $T$ be the maximal existence time of the evolution \eqref{eqpar},
and assume $T<+\infty$. Then
\begin{equation}\label{eqblow}
\lim_{t\to T} \|\kappa^2\|_{L^\infty} = +\infty.
\end{equation}
\end{proposition}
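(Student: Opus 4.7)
The plan is to argue by contradiction, leveraging the blow-up criterion stated right before the proposition: if $T<+\infty$, then either $|\gamma_x|^{-1}$ or some $|\partial_s^j\kappa|$ must blow up as $t\to T$. So I assume that, on the contrary, $\|\kappa^2\|_{L^\infty([0,1]\times[0,T))}$ is bounded by some constant $K<+\infty$, and aim to show that both $|\gamma_x|^{-1}$ and all the spatial derivatives $|\partial_s^j\kappa|$ remain bounded up to time $T$, producing the desired contradiction.

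The control on $|\gamma_x|^{-1}$ follows directly from the evolution equation \eqref{eqw} for $w=\log|\gamma_x|$. Indeed, under the standing assumption we have
\begin{equation*}
|w_t|=|\kappa(\kappa+g)|\le K+\sqrt{K}\,\|g\|_\infty,
\end{equation*}
so that integrating in time yields
\begin{equation*}
|w(x,t)-w(x,0)|\le T\bigl(K+\sqrt{K}\,\|g\|_\infty\bigr)
\end{equation*}
for all $(x,t)\in[0,1]\times[0,T)$. Since $\gamma_0$ is a smooth immersion, $w(\cdot,0)$ is bounded on $[0,1]$, and we conclude that both $|\gamma_x|$ and $|\gamma_x|^{-1}$ remain uniformly bounded on $[0,1]\times[0,T)$.

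For the higher derivatives, Lemma \ref{stimecurvature} is tailor-made for this situation: its constants $C_j$ depend only on $\bar t$, on $\max\kappa^2$ and on $\|g\|_{C^{j+2}}$. Applied on every subinterval $[0,\bar t]\subset[0,T)$, it gives a bound on $\max(\partial_s^j\kappa)^2$ that depends only on $T$, $K$, and the smooth data $g$, hence uniform in $\bar t$. Letting $\bar t\nearrow T$, we obtain uniform bounds on all $|\partial_s^j\kappa|$ on $[0,1]\times[0,T)$.

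Combining these two bounds contradicts the blow-up alternative recalled before the proposition, and therefore $\|\kappa^2\|_{L^\infty}$ must diverge as $t\to T$. The only delicate point is making sure that the alternative formulated in the H\"older parabolic framework of Theorem \ref{smallex} genuinely forces one of $|\gamma_x|^{-1}$ or $|\partial_s^j\kappa|$ to blow up; granted this standard fact (as invoked in the excerpt), the argument above is essentially a bootstrap: $\kappa$ bounded $\Rightarrow$ $|\gamma_x|^{\pm1}$ bounded via \eqref{eqw} $\Rightarrow$ all $\partial_s^j\kappa$ bounded via Lemma \ref{stimecurvature} $\Rightarrow$ contradiction.
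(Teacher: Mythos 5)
Your argument follows the paper's proof step for step: contradiction hypothesis $\kappa^2\le K$, then \eqref{eqw} controls $|\gamma_x|^{\pm 1}$, then Lemma \ref{stimecurvature} controls all $\partial_s^j\kappa$, contradicting the blow-up alternative from the parabolic H\"older theory of Theorem \ref{smallex}. The only thing you omit is the final upgrade from $\limsup$ to $\lim$: the contradiction argument shows only that $\|\kappa^2(\cdot,t)\|_{L^\infty}$ is unbounded as $t\to T$, i.e.\ $\limsup_{t\to T}\|\kappa^2\|_{L^\infty}=+\infty$, whereas the proposition asserts a genuine limit. The paper closes this with a one-line remark appealing to the evolution equation \eqref{eqkappa} (the maximum of $\kappa^2$ satisfies a Gronwall-type differential inequality, so once it becomes large it cannot drop back down quickly), and you should add that observation to finish.
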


\begin{proof}
Assume by contradiction that $\kappa^2$ is uniformly bounded for all $t\in [0,T)$ and $x\in [0,1]$. Equation
\eqref{eqw} implies that $|\gamma_x|$ and $1/|\gamma_x|$ are also uniformly bounded on $[0,1]\times[0,T)$.
But Lemma \ref{stimecurvature} implies that also the quantities $(\partial_j \kappa(x,t))^2$
are uniformly bounded on $[0,1]\times[0,T)$ for all $j\in \mathbb N$,
thus reaching a contradiction. We then proved  
\begin{equation*}
\limsup_{t\to T} \|\kappa^2\|_{L^\infty} = +\infty.
\end{equation*}
Notice that the $\limsup$ is indeed a full limit due to \eqref{eqkappa}.
\end{proof}

The following Lemma provides a lower bound to \eqref{eqblow}.

\begin{lemma}
Let $T$ as above and assume $T<+\infty$. The following curvature lower bound holds:
\begin{equation}\label{stimak}
\liminf_{t\to T}\sqrt{T-t}\,\|\kappa\|_{L^\infty}  \ge \frac{1}{\sqrt{2}}.
\end{equation}
\end{lemma}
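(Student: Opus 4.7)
The plan is to convert \eqref{eqkappa} into an ODE inequality for $M(t):=\|\kappa(\cdot,t)\|_{L^\infty}$ via a maximum principle at the points where $|\kappa|$ attains its spatial maximum, and then integrate backward from the blow-up time $T$, using that $M(t)\to+\infty$ by Proposition \ref{propex}.

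Following the envelope-theorem strategy of Lemma \ref{stimecurvature}, I would set $K(x,t):=\kappa^2(x,t)$ and, for each $t$ near $T$, pick $\bar x(t)\in[0,1]$ with $K(\bar x(t),t)=M(t)^2$. At such a point $K_s=0$ and $K_{ss}\le 0$, which together with $M(t)\ne 0$ yield $\kappa_s=0$ and $\kappa\,\kappa_{ss}\le 0$. Multiplying \eqref{eqkappa} by $2\kappa$ and evaluating at $\bar x(t)$ one therefore expects
\[
\frac{d}{dt}M(t)^2 \;\le\; 2\kappa^4 + 2\kappa^3 g(\gamma) + 2\kappa\, g_{ss}\quad\text{at }\bar x(t).
\]
Using the arclength chain rule $g_s=\nabla g\cdot\tau$ and $g_{ss}=(\nabla^2 g)[\tau,\tau]+\kappa\,\nabla g\cdot\nu$, so $|g_{ss}|\le C(1+|\kappa|)$, with $C$ depending only on $\|g\|_{C^2}$. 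Plugging this in and using $|\kappa(\bar x(t),t)|=M(t)$,
\[
\frac{d}{dt}M(t)^2 \;\le\; 2M(t)^4 + 2\|g\|_\infty M(t)^3 + 2C M(t)^2 + 2CM(t)\;=\;2M(t)^4\bigl(1+o(1)\bigr)
\]
as $M(t)\to+\infty$.

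Next, I would fix $\eta>0$ and, thanks to Proposition \ref{propex}, choose $t_0<T$ such that the $o(1)$ above is smaller than $\eta$ on $[t_0,T)$; this gives
\[
\frac{d}{dt}\!\left(-\frac{1}{M(t)^2}\right)\;=\;\frac{\dot{(M^2)}(t)}{M(t)^4}\;\le\;2(1+\eta).
\]
Integrating from $t\in[t_0,T)$ to $s\in(t,T)$ and letting $s\to T^-$ so that $M(s)^{-2}\to 0$ yields $M(t)^{-2}\le 2(1+\eta)(T-t)$, equivalently
\[
\sqrt{T-t}\,\|\kappa(\cdot,t)\|_{L^\infty}\;\ge\;\frac{1}{\sqrt{2(1+\eta)}},
\]
and \eqref{stimak} follows by taking $\liminf_{t\to T}$ and then sending $\eta\to 0$. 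The main obstacle I anticipate is that the arclength expansion of $g_{ss}$ carries a factor of $\kappa$, contributing an $M^2$ term to the inequality for $\dot{(M^2)}$; this is strictly lower order than the leading $M^4$, so it does not perturb the sharp constant $1/\sqrt 2$, but it is precisely why only the asymptotic bound can be expected. The remaining technical point, that $M$ is only Lipschitz in $t$, is handled verbatim as in Lemma \ref{stimecurvature} by interpreting $\dot{(M^2)}(t)$ as the maximum of $\partial_t K$ over the argmax set of $K(\cdot,t)$.
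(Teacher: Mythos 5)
Your argument is correct and follows the same basic strategy as the paper: derive, via Hamilton's trick at the spatial argmax, a Riccati-type ODE inequality for the square of a curvature-like quantity, then integrate backwards from the blow-up time $T$ furnished by Proposition \ref{propex}. The one genuine difference is the choice of monitoring function. You work with $K=\kappa^{2}$, so that when $(\kappa+g)_{ss}$ is expanded in \eqref{eqkappa} the full $g_{ss}=(\nabla^{2}g)[\tau,\tau]+\kappa\,\nabla g\cdot\nu$ appears; you must then absorb it as $O(M^{2})+O(M)$, which requires $\|g\|_{C^{2}}$ and yields a differential inequality $\dot{(M^{2})}\le 2(1+\eta)M^{4}$ that holds only for $M$ large (i.e., for $t$ close to $T$). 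The paper instead sets $w=(\kappa+g)^{2}$, so that $w_{ss}$ reproduces $(\kappa+g)_{ss}$ exactly (with a nonpositive defect $-2(\kappa+g)_{s}^{2}$) and the material derivative of $g$ contributes only $\nabla g\cdot\nu$; this gives $\dot M\le 2(1+\eps)(M+C)^{2}$ \emph{globally}, with $C$ depending only on $\|g\|_{W^{1,\infty}}$, and the translation from $\sqrt{w}$ back to $|\kappa|$ costs only the $L^{\infty}$-norm of $g$. Both versions give the sharp constant $1/\sqrt{2}$; the paper's variant is slightly cleaner and demands less regularity of $g$, which matters for the constant in Proposition \ref{protime} but not for the $\liminf$ in \eqref{stimak}. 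Your handling of the a.e.\ differentiability of $M^{2}$ and of the passage $s\to T^{-}$ is fine.
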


\begin{proof}
Notice that \eqref{eqkappa} can be written as 
\begin{equation}\label{eqkbis}
(\kappa+g)_t = (\kappa +g)_{ss} + (\kappa+g)\kappa^2 + (\kappa+g)\nabla g\cdot\nu.
\end{equation}
Letting $w:=(\kappa +g)^2$ and $\eps>0$, from \eqref{eqkbis} it follows
\begin{eqnarray}\label{eqktris}
\nonumber w_t &=& w_{ss} -2 (\kappa+g)_s^2 + 2 w(\sqrt w-g)^2 + 2w\nabla g\cdot\nu
\\
&\le& w_{ss} + 2 w\left(w-2g\sqrt w +g^2\right) + 2\|\nabla g\|_{L^\infty} w
\\
\nonumber &\le& w_{ss} + 2 w\left( (1+\eps)w+\left( 1+\frac{1}{\eps}\right)g^2\right) + 2\|\nabla g\|_{L^\infty} w
\\
\nonumber &\le& w_{ss} + 2(1+\eps) w^2 + 2\left(2 + \frac 1 \eps\right)\|g\|_{W^{1,\infty}}w.
\end{eqnarray}
Letting $M:=\max_{x\in [0,1]}(\kappa +g)^2$, from \eqref{eqktris} we get
\begin{equation}\label{eqM}
\frac{\rm d}{{\rm d}t}{(M+C)} = \dot M \le 2(1+\eps) M^2 + 2\left(2 + 
\frac 1 \eps\right)\|g\|_{W^{1,\infty}}M\le 2(1+\eps)(M+ C)^2,
\end{equation}
where $C=[(1+1/(2\eps))/(1+\eps)]\,\|g\|^2_{W^{1,\infty}}$, so that 
\[
-\frac{\rm d}{{\rm d} t} \frac{1}{M+C}\le 2(1+\eps).
\]
Integrating on $[t,s]\subset [0,T)$ we thus obtain 
\[
\frac{1}{M(t)+C}-\frac{1}{M(s)+C}\le 2(1+\eps)(s-t).
\]
Letting now $s\to T$ and recalling that $M(s)\to +\infty$ by Proposition \ref{propex}, we get
\[
\frac{1}{M(t)+C}\le 2(1+\eps)(T-t),
\]
that is
\begin{equation}\label{eqMbis}
M(t)\ge \frac{1}{2(1+\eps)(T-t)} - C,
\end{equation}
which gives the thesis.
\end{proof}

{}From  \eqref{eqMbis} and Proposition \ref{propex}  we obtain the following
estimate on the maximal existence time of the evolution.

\begin{proposition}\label{protime}
Let $T$ be the maximal existence time of \eqref{eqpar}, then 
\[
T \ge c(\|\kappa_0\|_{L^\infty}, \|g\|_{W^{1,\infty}}).
\] 
\end{proposition}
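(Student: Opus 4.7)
The plan is to extract the lower bound on $T$ directly from the estimate \eqref{eqMbis} that has already been established in the preceding lemma, by evaluating it at $t=0$.

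First, I recall that \eqref{eqMbis} gives
\[
M(t) \ge \frac{1}{2(1+\eps)(T-t)} - C
\]
with $M(t) = \max_x (\kappa + g)^2(\cdot,t)$ and $C = \frac{1+1/(2\eps)}{1+\eps}\|g\|_{W^{1,\infty}}^2$. Rewriting this as $(T-t)\,(M(t)+C) \ge \frac{1}{2(1+\eps)}$ and taking $t = 0$ yields
\[
T \;\ge\; \frac{1}{2(1+\eps)\,(M(0)+C)}.
\]

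The next step is to observe that the right-hand side depends only on the quantities allowed in the statement. Indeed, $M(0) = \max_{x\in[0,1]}(\kappa_0 + g(\gamma_0))^2 \le 2\,\|\kappa_0\|_{L^\infty}^2 + 2\,\|g\|_{L^\infty}^2$, and $C$ depends only on $\|g\|_{W^{1,\infty}}$ once $\eps$ is fixed (for instance $\eps = 1$). Thus the lower bound on $T$ is a function purely of $\|\kappa_0\|_{L^\infty}$ and $\|g\|_{W^{1,\infty}}$, which is exactly the conclusion.

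The only subtlety to check is that \eqref{eqMbis} is applicable at $t=0$: this is automatic, since the evolution exists on a nontrivial interval by Theorem \ref{smallex}, and the estimate \eqref{eqMbis} was derived by integrating the differential inequality \eqref{eqM} on $[t,s] \subset [0,T)$ and sending $s \to T$, which is legitimate precisely because of Proposition \ref{propex}. There is essentially no obstacle here, since all the real work (the differential inequality for $w = (\kappa+g)^2$ and the blow-up characterization of the maximal time) has been carried out in the earlier lemmas; this proposition is just the natural corollary.
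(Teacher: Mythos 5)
Your proposal is correct and follows exactly the route the paper has in mind: the paper itself gives only the one-line justification that Proposition \ref{protime} follows from \eqref{eqMbis} and Proposition \ref{propex}, and your write-up simply spells out that deduction (evaluate \eqref{eqMbis} at $t=0$, bound $M(0)$ in terms of $\|\kappa_0\|_{L^\infty}$ and $\|g\|_{L^\infty}$, and note the case $T=+\infty$ is trivial). Nothing is missing.
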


Notice that if the initial curve is embedded then, thanks to Proposition \eqref{protime},
it remains embedded in a time interval $[0,T']$, with $T'>0$ depending only on the initial datum 
and on $\|g\|_{W^{1,\infty}}$.

We think it is an interesting problem to determine whether or
not the constant $c$ in Proposition \ref{protime} depends only on the 
initial set and on the $L^\infty$-norm of $g$
(see for instance Section \ref{seciso} below for a special case).

\subsection{Huisken's monotonicity formula} 

In the following we derive a monotonicity formula for curvature flow with a forcing term, 
and apply it to the analysis of singularities. \\
By a standard computation, using the fact that $\gamma$ solves \eqref{eqpar}, we get the following formula.

\begin{lemma}\label{lemma1}
Let $\tau>0$ and let $f:\R^2\times[0,\tau)\to \R$ be a smooth function. Then 
\begin{equation}\label{eqf}
\frac{d}{dt}\int_\gamma f(\gamma(x(s),t) ,t)ds
=\int_\gamma \left[ f_t -\kappa(\kappa+g)f  +(\kappa+g)\nabla f \cdot\nu \right]ds.
\end{equation}
\end{lemma}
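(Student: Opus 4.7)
The plan is to rewrite the arc-length integral as a parametric integral over the fixed reference interval $[0,1]$, differentiate under the integral sign, and then convert back. Concretely, since $ds = |\gamma_x|\,dx$, I would write
\[
\int_\gamma f(\gamma(x(s),t),t)\,ds = \int_0^1 f(\gamma(x,t),t)\,|\gamma_x(x,t)|\,dx,
\]
where now the domain of integration $[0,1]$ is independent of $t$, so differentiation can be moved inside.

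Next I would compute the two time derivatives that arise. For the integrand $f(\gamma(x,t),t)$, the chain rule together with the evolution equation \eqref{eqpar} yields
\[
\partial_t\big(f(\gamma(x,t),t)\big) = f_t + \nabla f\cdot \gamma_t = f_t + (\kappa+g)\,\nabla f\cdot\nu.
\]
For the Jacobian factor $|\gamma_x|$, I would invoke Lemma on $w=\log|\gamma_x|$, namely equation \eqref{eqw}, to get
\[
\partial_t |\gamma_x| = |\gamma_x|\,w_t = -\kappa(\kappa+g)\,|\gamma_x|.
\]

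Combining via the product rule gives
\[
\frac{d}{dt}\int_0^1 f\,|\gamma_x|\,dx = \int_0^1\!\Big[\big(f_t + (\kappa+g)\nabla f\cdot\nu\big)|\gamma_x| - \kappa(\kappa+g)\,f\,|\gamma_x|\Big]dx,
\]
and reconverting $|\gamma_x|\,dx$ back to $ds$ delivers exactly \eqref{eqf}. There is no serious obstacle here: the proof is a direct application of the chain rule and the already-established commutation and evolution identities \eqref{eqpar} and \eqref{eqw}; the only thing to keep in mind is that differentiation under the integral is legitimate because $f$ is smooth and $\gamma$ is a smooth solution on $[0,\tau)$, so all integrands depend smoothly on $(x,t)$ on the compact interval $[0,1]$.
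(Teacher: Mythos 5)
Your proof is correct and is exactly the standard computation the paper alludes to (the paper omits the details, stating only that the formula follows ``by a standard computation, using the fact that $\gamma$ solves \eqref{eqpar}''). One small wording slip: you speak of using the ``commutation identity,'' but your argument never invokes \eqref{eqcomm} — you only need the evolution law \eqref{eqpar} and the length-element evolution \eqref{eqw}, and that is all that is required.
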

\noindent We denote by
$L_t(\gamma)$ the length of the curve $\gamma([0,1],t)$, that is
$$ L_t(\gamma):=\int_0^1 |\gamma_x|dx= \int_\gamma ds.$$
When no confusion can arise, we write $L(\gamma)$ instead of $L_t(\gamma)$.

\begin{corollary}\label{corper} 
Let $\gamma:[0,1]\times [0,T]\to \R^2$ be a solution to \eqref{eqpar}.
The following estimates hold:  
\begin{eqnarray}\label{paris}
L_t(\gamma ) &\le& L_0(\gamma )\,e^{\frac{\| g\|_\infty^2}{2}t}
\qquad \qquad\forall t\in [0,T] 
\\ \label{tours}
\int_0^T \int_\gamma\kappa^2 dsdt &\le &2\left( L_t(\gamma )-L_0(\gamma )\right) + \| g\|_\infty^2\,T
\\
&\le& 2L_0(\gamma)\left(e^{\frac{\| g\|_\infty^2}{2}t}-1\right) + \| g\|_\infty^2\,T.    
\end{eqnarray} 
\end{corollary}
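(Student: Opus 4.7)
The plan is to derive both estimates from the length-evolution identity produced by Lemma \ref{lemma1} applied with the test function $f\equiv 1$. Since $f_t=0$ and $\nabla f=0$, formula \eqref{eqf} reduces to
\[
\frac{d}{dt} L_t(\gamma) \;=\; -\int_\gamma \kappa(\kappa+g)\,ds \;=\; -\int_\gamma \kappa^2\,ds \;-\; \int_\gamma \kappa g\,ds,
\]
which for $g\equiv 0$ recovers the classical length-decay identity of the unperturbed curve shortening flow. Thus the only obstruction to monotonicity is the cross term $\int_\gamma \kappa g\,ds$, which I would absorb by a Cauchy--Schwarz / Young chain:
\[
\left|\int_\gamma \kappa g\,ds\right| \;\le\; \|g\|_\infty \int_\gamma |\kappa|\,ds \;\le\; \|g\|_\infty\,\sqrt{L_t(\gamma)}\left(\int_\gamma\kappa^2\,ds\right)^{1/2} \;\le\; \tfrac12\int_\gamma \kappa^2\,ds \;+\; \tfrac12\|g\|_\infty^2\, L_t(\gamma).
\]
Combining these yields the core differential inequality
\[
\frac{d}{dt} L_t(\gamma) \;\le\; -\tfrac12\int_\gamma \kappa^2\,ds \;+\; \tfrac12\|g\|_\infty^2\, L_t(\gamma).
\]

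From this one inequality, both conclusions of the corollary follow by standard manoeuvres. For \eqref{paris}, I would simply drop the non-positive term $-\tfrac12\int_\gamma\kappa^2\,ds$ to obtain the linear ODE inequality $\dot L_t \le \tfrac12\|g\|_\infty^2\, L_t$ and conclude by Gr\"onwall's lemma. For \eqref{tours}, I would rewrite the same inequality as
\[
\tfrac12\int_\gamma \kappa^2\,ds \;\le\; -\frac{d}{dt} L_t(\gamma) \;+\; \tfrac12\|g\|_\infty^2\, L_t(\gamma),
\]
then integrate in $t$ over $[0,T]$, producing an estimate of the form $\int_0^T\!\int_\gamma\kappa^2\,ds\,dt \le 2\bigl(L_0(\gamma)-L_T(\gamma)\bigr) + \|g\|_\infty^2\!\int_0^T L_t(\gamma)\,dt$. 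The final bound stated in the corollary then follows by using \eqref{paris} to control $L_t(\gamma)$ inside the last integral.

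There is no serious obstacle; the only real choice is how to split the cross term. The symmetric $\tfrac12$--$\tfrac12$ split above is what makes the constant $2$ on the right-hand side of \eqref{tours} appear cleanly: any other splitting $\varepsilon\kappa^2+\tfrac{1}{4\varepsilon}g^2$ would work but would force the constant $1/\varepsilon$ into the length term and then require a limit $\varepsilon\to 1/2$ (or optimization) to recover the stated form. Given the precise constants in \eqref{paris} and \eqref{tours}, the $\tfrac12$--$\tfrac12$ split is the natural one and the bookkeeping goes through without further ado.
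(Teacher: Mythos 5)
Your proposal is correct and takes essentially the same route as the paper: set $f\equiv 1$ in Lemma~\ref{lemma1} to get $\dot L_t=-\int_\gamma\kappa(\kappa+g)\,ds$, then absorb the cross term by a $\tfrac12$--$\tfrac12$ Young split to obtain $\dot L_t\le -\tfrac12\int_\gamma\kappa^2\,ds+\tfrac12\|g\|_\infty^2 L_t$, from which \eqref{paris} follows by Gr\"onwall and \eqref{tours} by integrating in time. The only cosmetic difference is that the paper applies Young pointwise to $-\kappa g\le\tfrac12\kappa^2+\tfrac12 g^2$ instead of first using Cauchy--Schwarz on the integral, but both yield the identical differential inequality and the same constants.
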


\begin{proof} Taking $f\equiv 1$ in \eqref{eqf} we have
\begin{equation}\label{eqL}
\frac{\partial}{\partial t}L_t(\gamma)= -\int_\gamma \kappa(\kappa+g)ds
\le \int_\gamma -\frac{\kappa^2}{2} + \frac{g^2}{2}ds, 
\end{equation} 
which gives \eqref{paris} by Gronwall's Lemma. 
Estimate \eqref{tours} also follows by integrating \eqref{eqL} on $[0,T]$.
\end{proof} 
 
We now apply Lemma \ref{lemma1} with $f(p,t)= \frac{e^{-|p-p_0|^2/4(T-t)}}{\sqrt{4\pi(T-t)}}$, $p,p_0\in \R^2$, 
and we get 
\begin{eqnarray}\label{conto1}
\nonumber && \frac{d}{dt}\int_\gamma \frac{e^{-|\gamma(x(s),t) -p_0|^2/4(T-t)}}{\sqrt{4\pi(T-t)}}ds
\\ \nonumber
&& = -\int_\gamma \frac{e^{-|\gamma-p_0|^2/4(T-t)}}{\sqrt{4\pi(T-t)}}\left[ 
 \frac{|\gamma -p_0|^2}{4(T-t)^2}-\frac{1}{2(T-t)} +\kappa(\kappa+g)  
 +(\kappa+g)\frac{(\gamma -p_0)\cdot \nu}{2(T-t)}   \right]ds
\\\nonumber
&& =-\int_\gamma \frac{e^{-|\gamma -p_0|^{2}/4(T-t)}}{\sqrt{4\pi (T-t)}}
\left[ \frac{\gamma -p_0}{2(T-t)} + \left(\kappa +\frac{g}{2}\right)\nu \right]^{2} ds
+\frac{1}{4}\int_\gamma \frac{e^{-|\gamma -p_0|^{2}/4(T-t)}}{\sqrt{4\pi (T-t)}}  g^{2} ds
\\
&& \quad +\int_\gamma  
\frac{e^{-|\gamma -p_0|^{2}/4(T-t)}}{\sqrt{4\pi(T-t)}}\left[\frac{1}{2(T-t)}  
+\kappa \frac{(\gamma -p_0)\cdot \nu}{2(T-t)} \right]ds.
\end{eqnarray} 
Following \cite[Theorem 3.1]{Hu:90}, the last term can be actually written as
\begin{eqnarray*}
&& \int_\gamma  \frac{e^{-|\gamma-p_0|^2/4(T-t)}}{\sqrt{4\pi(T-t)}}\left[\frac{1}{2(T-t)}   
+\kappa
\frac{(\gamma -p_0)\cdot \nu}{2(T-t)} 
\right]ds
\\ && =\int_\gamma  \frac{e^{-|\gamma-p_0|^2/4(T-t)}}{\sqrt{4\pi(T-t)}}\left|
\frac{(\gamma -p_0)\cdot \tau}{2(T-t)}
\right|^2 ds.
\end{eqnarray*}
Substituing this in \eqref{conto1} we obtain an analog of Huisken's monotonicity formula (\cite{Hu:90}) 
\begin{eqnarray}\label{monotonicity}
&& \frac{d}{dt}\int_\gamma \frac{e^{-|\gamma(x(s),t)-p_0|^2/4(T-t)}}{\sqrt{4\pi(T-t)}}ds
\\\nonumber 
&& = \int_\gamma  \frac{e^{-|\gamma(x(s),t)-p_0|^2/4(T-t)}}{\sqrt{4\pi(T-t)}}\left(-\left[ \kappa+
\frac{(\gamma(x(s),t)-p_0)\cdot \nu}{2(T-t)} 
+\frac{g }{2} \right]^2  +\frac{1}{4}  g^2  \right)ds.
\end{eqnarray}
In the next paragraph we will apply this formula in the analysis of type I singularities. 


\subsection{Type I singularities} 
We assume that at time $T$ the flow is developing a singularity of type I, i.e. there exists a constant $C_0>1$ such that 
\begin{equation}\label{sing1}
\max_{x\in[0,1]} |\kappa(x,t)|\leq \frac{C_0}{\sqrt{2(T-t)}}.
\end{equation} 

Observe that for every $x$ and $0\leq t\leq r<T$  
\begin{equation}\label{gamma1}
|\gamma(x,r)-\gamma(x,t)|\leq C_0 \sqrt{2(T-t)}-C_0 \sqrt{2(T-r)} +\|g\|_\infty (r-t).
\end{equation}
This implies that the functions $\gamma(\cdot, t)$ converge uniformly 
to a function $\gamma_T$ as $t\to T$. 
Now we fix   $x\in [0,1]$ such that $\gamma(x,t)\to \gamma_T(x)=:\widehat{p}$ and  $\kappa(x,t)$ becomes unbounded as $t\to T$. 
We rescale the curve around the point $\widehat{p} $ as follows: 
\[
\tilde{\gamma}(x,z):=\frac{\gamma(x,t(z))-\widehat{p}}{\sqrt{2(T-t(z))}}
\qquad z(t):= -\log\sqrt{T-t}.
\]
{}From \eqref{gamma1}, we deduce \[|\tilde{\gamma}(x,z)|=\left|\frac{\gamma(x,t(z))-\widehat{p}}{\sqrt{2(T-t(z))}}\right|\leq C_0 + \frac{1}{\sqrt{2}}\|g\|_\infty e^{-z}\leq C_0 + \frac{1}{\sqrt{2}}\|g\|_\infty ,\] 
so in particular  $\tilde{\gamma}(x,z)$ remains bounded as $z\to +\infty$. The evolution law satisfied by  the rescaled curve 
$\tilde\gamma$ is 
\begin{equation}\label{eqkgrescaled}
\tilde{\gamma}_z=(\tilde{\kappa}+\sqrt{2}e^{-z}g)\tilde{\nu}+\tilde{\gamma}
\qquad z\in [-\log\sqrt{T}, +\infty).
\end{equation}
We also the rescaled version of the monotonicity formula \eqref{monotonicity}: letting $y = \frac{x-\widehat{p}}{\sqrt{2(T-t)}}$,
we compute
\begin{eqnarray}\label{monorisc}
\nonumber 
&&\frac{d}{dz} \int_{\tilde{\gamma}}  e^{-|\tilde{\gamma}(x(\tilde{s}),z)|^2/2}  d\tilde{s}
 =  2(T-t)\frac{d}{dt}\int_{\gamma} \frac{e^{-|\gamma(x(s),t)-\widehat{p}|^2/4(T-t)}}{\sqrt{2(T-t)}}\,ds
\\
&& =   \int_{\tilde{\gamma}}  e^{-|\tilde{\gamma}(x(\tilde{s}),z)|^2/2} \left(-
\left[  \tilde{\kappa}+ \left\langle  \tilde{\gamma}(x(\tilde{s}),z)|\tilde{\nu}
\right\rangle +e^{-z}\frac{g}{\sqrt{2}} 
\right]^2 +\frac{e^{-2z}}{2}  g^2 \right)  d\tilde{s} .
\end{eqnarray}

Letting $F(z):=\int_{\tilde{\gamma}}   e^{-|\tilde{\gamma}(x(\tilde{s}),z)|^2/2}  d\tilde{s}$, 
equation \eqref{monorisc} gives
\begin{equation}\label{eqfz}
\frac{d}{dz}F(z) \leq  \frac{\|g\|^2_\infty}{2} e^{-2z}F(z)\le \frac{\|g\|^2_\infty}{2} e^{-2z}.
\end{equation}
Integrating \eqref{eqfz} we obtain 
\[
F(z)\le e^{\|g\|^2_\infty T/4}F(-\log\sqrt{T})
\qquad \forall z\ge -\log\sqrt{T}.
\]
In particular, we deduce that for every $R>0$ there exists a uniform bound on $\H^1(\tilde{\gamma}([0,1], z)\cap B(0,R))$. Indeed 
\begin{eqnarray}\label{stima}
\H^1(\tilde{\gamma}([0,1], z)\cap  B(0,R))&=& \int_{\tilde{\gamma}} \chi_{ B(0,R)} d\tilde{s} \leq \int_{\tilde{\gamma}} \chi_{ B(0,R)}  e^{\frac{R^2-|\tilde{\gamma}(x,z)|^2}{2}}d\tilde{s} 
\\ \nonumber 
&\leq&   e^{\frac{|R|^2}{2}} F(z)\leq K, 
\end{eqnarray} 
for some positive constant $K$.

\begin{proposition}\label{sing1prop} 
Under assumption \eqref{sing1}, for each sequence $z_j\to +\infty$ there exists a subsequence $z_{j_k}$ 
such that the curve $\tilde{\gamma}(\cdot, z_{j_k})$, rescaled around $\widehat{p}$, locally smoothly converges to some smooth, 
nonflat limit curve $\tilde{\gamma}_\infty$, such that 
\begin{equation}\label{eqomo}
\tilde{\kappa} +\left\langle \tilde{\gamma}_\infty|\tilde{\nu} \right\rangle =0.
\end{equation}
\end{proposition}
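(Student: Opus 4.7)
The plan is to follow Huisken's blow-up analysis for mean curvature flow, adapted to the rescaled equation \eqref{eqkgrescaled}, using the perturbed monotonicity formula \eqref{monorisc} to identify the limit.

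For the compactness step, I would first note that the type I assumption \eqref{sing1} rescales to the uniform bound $|\tilde\kappa(x,z)|\le C_0$, and that \eqref{stima} controls $\H^1(\tilde\gamma([0,1],z)\cap B(0,R))$ uniformly in $z$ on any ball $B(0,R)$. To upgrade this to smooth convergence, I would apply the analogue of Lemma \ref{stimecurvature} to the rescaled evolution \eqref{eqkgrescaled} on any bounded window $[z_{j}-1,z_{j}]$, obtaining uniform bounds on all derivatives $\partial_{\tilde s}^{k}\tilde\kappa$ in bounded spatial regions; the added drift $\tilde\gamma$ and the decaying perturbation $\sqrt{2}\,e^{-z}g$ in \eqref{eqkgrescaled} are harmless inside such a window (we work locally and $e^{-z}$ is small). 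Arzel\`a--Ascoli then yields a subsequence $z_{j_k}\to+\infty$ along which $\tilde\gamma(\cdot,z_{j_k})$ converges locally smoothly to some smooth immersed $\tilde\gamma_\infty$.

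Next I would derive \eqref{eqomo} by integrating \eqref{monorisc} in $z$. From \eqref{eqfz}, $F$ is bounded above and $F(z)+\tfrac{\|g\|_\infty^2}{4}e^{-2z}$ is monotone up to an integrable tail, so $F$ admits a finite limit as $z\to+\infty$. Integrating \eqref{monorisc} over $[z_0,+\infty)$ and using $\int_{z_0}^\infty e^{-2z}dz<\infty$ gives
\[
\int_{z_0}^{+\infty}\int_{\tilde\gamma} e^{-|\tilde\gamma|^2/2}\Bigl[\tilde\kappa+\left\langle\tilde\gamma|\tilde\nu\right\rangle+\tfrac{e^{-z}g}{\sqrt{2}}\Bigr]^{2} d\tilde s\,dz<+\infty.
\]
Hence the inner integral tends to zero along some sequence of $z$'s, and by a diagonal extraction I may assume this is the same subsequence giving smooth convergence. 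Since $e^{-z_{j_k}}g\to 0$ uniformly on bounded regions, passing to the limit in the integrand (via local smooth convergence and Fatou on bounded balls) yields
\[
\int_{\tilde\gamma_\infty}e^{-|\tilde\gamma_\infty|^2/2}\bigl[\tilde\kappa_\infty+\left\langle\tilde\gamma_\infty|\tilde\nu\right\rangle\bigr]^{2}d\tilde s_\infty=0,
\]
which is \eqref{eqomo}.

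The main obstacle is establishing that $\tilde\gamma_\infty$ is nonflat. By \eqref{eqomo}, any line-limit would have to pass through the origin, and its value $F(\infty)$ is a fixed explicit number (that of a Gaussian on a line). I would exclude this by adapting Huisken's density lower bound: at a type~I singular point the unperturbed monotonicity forces the limiting Gaussian density at $\widehat{p}$ to strictly exceed the density of a line, and this strict inequality is preserved under our bounded forcing since the perturbation contributes only the integrable error $\tfrac{\|g\|_\infty^2}{4}e^{-2z}$ in \eqref{monorisc}. The delicate point is making this gap quantitative, so that the $g$-induced error cannot close the distance between the actual limiting density and that of a line; once this is done, the flat alternative is ruled out and the proposition follows.
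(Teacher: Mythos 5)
Your compactness step and your derivation of the self-shrinker equation \eqref{eqomo} follow the same lines as the paper (and Huisken): the type~I bound makes $|\tilde\kappa|$ uniformly bounded, \eqref{stima} gives uniform local length bounds, curvature-derivative estimates give smooth subconvergence, and integrating \eqref{monorisc}--\eqref{eqfz} forces the square bracket to vanish in the limit along a further subsequence. That part is fine and matches the paper.

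The genuine gap is in the nonflatness step, which you yourself flag as the ``delicate point.'' The paper does not use a Gaussian-density comparison here at all: it invokes the curvature lower bound \eqref{stimak}, which is a direct by-product of \eqref{eqMbis}. Since $\liminf_{t\to T}\sqrt{T-t}\,\|\kappa(\cdot,t)\|_{L^\infty}\ge 1/\sqrt2$ and the rescaled curvature is $\tilde\kappa=\sqrt{2(T-t)}\,\kappa$, one gets $\liminf_{z\to\infty}\|\tilde\kappa(\cdot,z)\|_{L^\infty}\ge 1$; the rescaled curves carry curvature of order one, so the smooth limit cannot be a straight line. Your proposed alternative---showing that the limiting Gaussian density strictly exceeds the planar value despite the forcing---would require a quantitative clearing-out/White-type regularity statement for this perturbed flow, which you do not supply; noting that the forcing contributes only an integrable extra term in \eqref{monorisc} does not by itself produce a definite gap between $F(\infty)$ and the density of a line, since $F(\infty)$ is not known a priori. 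As written, the nonflatness step is therefore not established. Replacing it by the direct appeal to \eqref{stimak}, as the paper does, closes the argument and is considerably simpler.
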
 

\begin{proof}
The proof follows the same argument as in \cite[Proposition 3.4]{Hu:90}. Indeed, 
the limit curve is smooth thanks to \eqref{stima},
Proposition \ref{propex} and the fact that the rescaled curve $\tilde\gamma$ 
has uniformly bounded curvature. Moreover, it is nonflat by \eqref{stimak}.
Finally, the limit curve satisfies \eqref{eqomo} thanks to \eqref{monorisc} and \eqref{eqfz}.
\end{proof} 

\begin{remark}\label{remuno}\rm
Proposition \ref{sing1prop} implies that the type I singularities of \eqref{eqkg}
are modeled by homothetic solutions of the flow. 
We recall all such solutions correspond to closed curves and 
that the circle is the only embedded one (see \cite{AL:86}), hence  
$T$, under assumption \eqref{sing1}, is actually the extinction time for the evolution. 
{}From this we can conclude that 
\[
T \ge c(\|\kappa_0\|_{L^\infty}, \|g\|_{L^\infty}).
\] 
\end{remark}

\subsection{Type II singularities} 

We consider now the case   that at time $T$ the flow is developing a singularity of type II, i.e. 
\begin{equation}\label{sing2}\limsup_{t\to T}\max_{x\in[0,1]} |\kappa(x,t)| \sqrt{ T-t }=+\infty.\end{equation} 
\begin{proposition}
Under condition \eqref{sing2}, there exists a sequence of points and times $(x_n,t_n)$  on which
the curvature blows up such that the rescaled curve along this sequence converges in $\mathcal{C}^{\infty}$  to a
planar, convex limiting solution, which moves by translation.
\end{proposition}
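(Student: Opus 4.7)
The plan is to adapt Hamilton's classical blow-up procedure for Type II singularities of the curve shortening flow, exploiting the fact that under the appropriate parabolic rescaling the forcing term $g$ becomes negligible and one recovers the unperturbed flow in the limit.

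Following Hamilton, for each integer $n\ge 1$ set $T_n := T - 1/n$ and pick $(x_n,t_n)\in[0,1]\times[0,T_n]$ realizing the maximum on that compact set of the function $(x,t)\mapsto \kappa(x,t)^2(T_n-t)$. Condition \eqref{sing2} forces $\kappa(x_n,t_n)^2(T_n-t_n)\to+\infty$, and hence $t_n\to T$ and $\lambda_n:=|\kappa(x_n,t_n)|\to+\infty$. Consider the rescaled curves
$$
\tilde{\gamma}_n(x,\tau) := \lambda_n\bigl(\gamma(x,\, t_n+\tau/\lambda_n^2) - \gamma(x_n,t_n)\bigr)
$$
for $\tau$ in the expanding interval $I_n := [-\lambda_n^2 t_n,\, \lambda_n^2(T_n-t_n))$, which exhausts $\R$ by the choice above. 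A direct computation using \eqref{eqpar} shows
$$
\partial_\tau\tilde{\gamma}_n = \bigl(\tilde{\kappa}_n + \lambda_n^{-1}\tilde g_n\bigr)\tilde{\nu}_n,
\qquad
\tilde g_n(\tilde p) := g\bigl(\gamma(x_n,t_n)+\tilde p/\lambda_n\bigr),
$$
so the effective forcing $\lambda_n^{-1}\tilde g_n$, together with all of its derivatives, tends to zero uniformly on compact sets.

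Hamilton's choice of $(x_n,t_n)$ yields the one-sided parabolic bound
$$
\tilde{\kappa}_n(x,\tau)^2 \le \frac{T_n-t_n}{T_n-t_n-\tau/\lambda_n^2}\longrightarrow 1,
$$
uniformly on compact subsets of $\R$, with equality at $\tau=0$ and $x=x_n$. Combining this curvature bound with Lemma \ref{stimecurvature} applied to the rescaled flow---whose forcing has vanishing $C^{j+2}$-norms in the limit---produces uniform bounds on every $\partial_{\tilde s}^j\tilde\kappa_n$. After recentering the arclength parameter so that $x_n$ corresponds to the origin, an Arzel\`a--Ascoli diagonal extraction yields a subsequence of $\tilde{\gamma}_n$ converging in $C^\infty_{\mathrm{loc}}(\R\times\R,\R^2)$ to a smooth, nonflat eternal solution $\tilde{\gamma}_\infty$ of the unforced curve shortening flow $v=\tilde\kappa_\infty\tilde\nu_\infty$, satisfying $|\tilde\kappa_\infty|\le 1$ everywhere and $\tilde\kappa_\infty(0,0)=1$.

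Since $\tilde{\kappa}_\infty$ attains its positive maximum at an interior space-time point and solves $\partial_\tau\tilde\kappa_\infty = \partial_{\tilde s}^2\tilde\kappa_\infty + \tilde\kappa_\infty^3$, Hamilton's Harnack-type inequality combined with the strong maximum principle force $\tilde{\gamma}_\infty$ to be a convex planar translating soliton; by the classification of embedded eternal solutions of the planar unforced flow (cf.\ \cite{A:91}), $\tilde\gamma_\infty$ must be the Grim Reaper. The principal technical hurdle is the third step: verifying that Lemma \ref{stimecurvature} transfers usefully to the rescaled flow. This is possible because the constants $C_j$ in that lemma depend polynomially on $\|g\|_{C^{j+2}}$, while the rescaled forcing satisfies $\|\lambda_n^{-1}\tilde g_n\|_{C^k}\le\lambda_n^{-1}\|g\|_{C^k}\to 0$, so the relevant bounds remain uniformly controlled along the sequence.
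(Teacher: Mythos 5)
Your setup is essentially the right one and matches the spirit of the paper: you pick a Hamilton/Altschuler-type blow-up sequence $(x_n,t_n)$, rescale by $\lambda_n=|\kappa(x_n,t_n)|$, observe that the rescaled forcing $\lambda_n^{-1}g(\cdot/\lambda_n+\gamma(x_n,t_n))$ and all its derivatives vanish in the limit, use the one-sided parabolic bound $\tilde\kappa_n^2\lesssim 1$ together with Lemma \ref{stimecurvature} to get $C^\infty_{\mathrm{loc}}$ subconvergence to a nonflat eternal solution $\tilde\gamma_\infty$ of the unforced flow with $|\tilde\kappa_\infty|\le 1=|\tilde\kappa_\infty(0,0)|$. (The paper uses Altschuler's variant of the maximizing sequence, you use Hamilton's maximization of $\kappa^2(T_n-t)$; this is a cosmetic difference and both give the required bound.)

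The gap is in the convexity step. You write that since $\tilde\kappa_\infty$ attains its maximum at an interior point and solves $\partial_\tau\tilde\kappa_\infty=\partial_{\tilde s}^2\tilde\kappa_\infty+\tilde\kappa_\infty^3$, ``Hamilton's Harnack-type inequality combined with the strong maximum principle force $\tilde\gamma_\infty$ to be a convex planar translating soliton.'' This is circular: Hamilton's differential Harnack inequality for curve shortening flow is proved \emph{for convex solutions}, so it cannot be invoked to establish convexity. The strong maximum principle applied to the curvature equation also does not do the job here: knowing $|\tilde\kappa_\infty|\le 1$ with equality at $(0,0)$ does not preclude $\tilde\kappa_\infty$ from changing sign elsewhere (the relevant quantity $1-\tilde\kappa_\infty\ge 0$ satisfies $(1-\tilde\kappa_\infty)_\tau=(1-\tilde\kappa_\infty)_{\tilde s\tilde s}+\tilde\kappa_\infty^2(1-\tilde\kappa_\infty)-\tilde\kappa_\infty^2$, whose inhomogeneous term has the wrong sign for a strong minimum principle argument). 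Convexity of a Type~II blow-up limit is a genuine theorem (Altschuler), not a consequence of the max principle alone.

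What the paper actually does — and what your argument needs — is Altschuler's inflection-point estimate. Differentiating the total absolute curvature yields, for the forced flow,
\begin{equation*}
\frac{d}{dt}\int_\gamma |\kappa|\,ds = -2\sum_{\kappa=0}|\kappa_s| + \int_\gamma \frac{\kappa}{|\kappa|}\,g_{ss}\,ds \le \bigl(\|\nabla g\|_\infty+\|D^2g\|_\infty\bigr)L_t(\gamma),
\end{equation*}
so $t\mapsto\int_\gamma|\kappa|\,ds$ has a finite limit as $t\to T$. Writing the analogous identity for the rescaled curves $\gamma_n$ over $u\in[-M,M]$ and passing to the limit, the total-curvature increments vanish and the forcing contribution is $O(k_n^{-1})$, so
\begin{equation*}
\int_{-M}^M \sum_{\kappa_n=0}|(\kappa_n)_{s}|\,du \longrightarrow 0,
\end{equation*}
which is exactly what lets one conclude, as in \cite[Theorem~7.7]{A:91}, that the limit has no inflection points and is convex. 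Only after convexity is established can one classify the eternal solution with curvature attaining its maximum as the Grim Reaper. You should replace the Harnack/maximum-principle sentence with this argument; note also that it (mildly) uses $g\in C^2$ with bounded derivatives, which is available in this section since $g$ is smooth.
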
 
\begin{proof}
By means of \eqref{eqkappa}, an easy calculation implies that
\begin{equation}\label{kappa}
\frac{d}{dt}\int_\gamma |\kappa| ds= -2\sum_{x:\,  \kappa(x,t)=0} |\kappa_s| +\int_\gamma \frac{\kappa}{|\kappa|}g_{ss}  ds\leq (\|\nabla g\|_\infty+\|D^2g\|_\infty) L_t(\gamma).\end{equation}
Recall that, by  Corollary \ref{corper},  $L_t(\gamma)\leq L_0(\gamma) e^{\frac{\|g\|_\infty^2}{ 2 }t }$. 

Following \cite{A:91} we choose a sequence $(x_n,t_n)$ such that 
\begin{itemize}
\item $t_n\in [0, T-\frac{1}{n})$ and $t_n<t_{n+1}$;
\item $k_n=|\kappa(x_n, t_n)|\to +\infty$  and \begin{equation}\label{kn}k_n\sqrt{T-\frac{1}{n}-t_n}=\max_{t\in [0, T-1/n]}\left(\|\kappa\|_{\infty} \sqrt{T-\frac{1}{n}-t}\right)\to +\infty\qquad \text{ as }n\to +\infty.\end{equation}
\end{itemize} 
We define the new parameter $u$ as follows $ u=k_n^2(t-t_n)$, $u\in [-k_n^2 t_n, k_n^2(T-t_n)]$ and the rescaled curve along the sequence
$(x_n,t_n)$ as $\gamma_n(x,u)=k_n(\gamma(x,t(u))-\gamma(x_n,t_n))$, for $x\in [0,1]$. Observe that $\gamma_n(x_n,0)=(0,0)$ and $\kappa_{\gamma_n}(x_n,0)=\kappa_n(x_n,0)=1$.
 Moreover \begin{equation}\label{eqgn} v_n=\frac{d}{du}\gamma_n =\frac{(\kappa +g(\gamma)) \nu}{k_n}=  (\kappa_n+g_n)\nu_n\end{equation} where $g_n(y)= g(y/k_n +\gamma(x_n,t_n))/k_n$. 
 
 Note that for every $\eps>0$, $\omega>0$,there exists $\overline{n}$ such that $\kappa_n^2 \leq 1+\eps$ for $u\in [-k_n^2 t_n, \omega]$. Indeed, using \eqref{kn}, we get 
\[ \kappa_n^2(x,u)= \frac{\kappa^2(x,t(u))}{k_n^2}\leq  \frac{T-1/n-t_n}{T-1/n-t(u)}=\frac{T-1/n-t_n}{T-1/n-t_n-u/k_n^2}.\]
This implies that, on every bounded interval of time, the curvatures of the rescaled curves are uniformly bounded. Moreover, from this, we deduce   uniform bounds  also on  the derivatives of the curvature, using Lemma \ref{stimecurvature} and recalling that $\gamma_n$ satisfies \eqref{eqgn}  and  the fact that $\|\nabla^j g_n\|_\infty=  \|\nabla^j g \|_\infty/ k_n^{j+1}\to 0$.  By the same argument 
of \cite[Theorem 7.3]{A:91}, 
this implies that there exists a subsequence along which the rescaled curves converge smoothly to a smooth, non trivial limit $\gamma_\infty$ defined in $(-\infty, +\infty)$.  Moreover $\gamma_\infty$ evolves by mean curvature flow,  $L_t(\gamma_\infty)=+\infty$ and $\|\kappa_{\infty}\|_\infty=1=|\kappa_{\infty}(0,0)|$.   

We prove now that $\gamma_\infty$ is convex. \\
Let us first observe that by \eqref{kappa} the function 
$t\to \int_{\gamma} |\kappa |ds$ admits a finite limit as $t\to T^-$. Moreover, from \eqref{kappa} we also obtain 
\[
\frac{d}{du}\int_{\gamma_n} |\kappa_n| ds=-2\sum_{\kappa_n(x,u)=0} |(\kappa_n)_{s}|
+\int_{\gamma_n} \frac{\kappa_n}{|\kappa_n|}(g_n)_{s_{n}s_{n}} ds_n.
\] 
So 
\begin{eqnarray*} 
-2\int_{-M}^M\!\! \sum_{\kappa_n=0} |(\kappa_n)_{s}| du &=&  
\int_{\gamma_n} \left(|\kappa_n(x,M)|- |\kappa_n(x,-M)|\right)ds
\\
&& -\int_{-M}^M
\int_{\gamma_n} \frac{\kappa_n}{|\kappa_n|}(g_n)_{s s}dsdu
\\ 
&=& \int_{\gamma } \left(\left|\kappa \left(x,t_n+\frac{M}{k_n^2}\right)\right|
-  \left|\kappa \left(x,t_n-\frac{M}{k_n^2}\right)\right|\right)ds
\\
&& -\int_{-M}^M
\int_{\gamma_n} \frac{\kappa_n}{|\kappa_n|}(g_n)_{s s}dsdu \,.
\end{eqnarray*}
Letting $n\to +\infty$ along the subsequence on which $\gamma_n\to \gamma_\infty$, we get 
\[
\int_{\gamma } \left|\kappa \left(x,t_n+\frac{M}{k_n^2}\right)\right|ds
-\int_{\gamma } \left|\kappa \left(x,t_n-\frac{M}{k_n^2}\right)\right|ds\to 0.
\] 
Recalling  the estimates on the length in Corollary \ref{corper}, we also have
\begin{eqnarray*}
\left| \int_{-M}^M
\int_{\gamma_n} \frac{\kappa_n}{|\kappa_n|}(g_n)_{s s}ds du\right| 
&\le&  
\frac{2}{k_n} \left( \|\nabla g\|_\infty +\frac{\|D^2 g\|}{k_n}\right)L_0(\gamma_n) 
\frac{e^{\frac{ \|g_n\|_\infty M}{2}}- e^{ -\frac{ \|g_n\|_\infty M}{2}}}{\|g_n\|_\infty}
\\
&& \to 0 \qquad \qquad \textrm{as } n\to +\infty\,.
\end{eqnarray*}
In particular, this gives 
\[
-2\int_{-M}^M\sum_{x,  \kappa_n(x,u)=0} |(\kappa_n)_{s}| du \to 0 \qquad \text{as }n\to +\infty,
\]
and we can conclude as in \cite[Theorem 7.7]{A:91} that $\gamma_\infty$ is a convex eternal solution to the curvature flow, 
that is, $\gamma_\infty$ is the so-called {\it Grim Reaper}. 
\end{proof}

\subsection{The embedded case}\label{seciso}

In this section we strengthen Proposition \ref{protime} in the case of 
embedded planar curves.

Following \cite{Hu:98} we define 
\begin{equation}\label{defeta}
\eta(t):=\inf_{x<y}\frac{|\gamma(x,t)-\gamma(y,t)|}{L_{x,y}(t)},
\end{equation}
where 
\[
L_{x,y}(t) = \int_x^y |\gamma_x(\sigma,t)|\,d\sigma.
\]
Notice that the infimum in \eqref{defeta} is in fact a minimum, moreover $\eta$ is a continuous function in $[0, T)$, where $T$ is the first singularity. 
Since the initial curve is embedded, we have $\eta(0)>0$.
Let now 
\[
\E(t):=\left\{(x,y):\,x<y\ {\rm and}\ \eta(t) = \frac{|\gamma(x,t)-\gamma(y,t)|}{L_{x,y}(t)}\right\}.
\]
Notice that, if $\eta(t)<\sqrt 2/2$, we have the estimate 
\[
\int_x^y |\kappa| \ |\gamma_x(\sigma,t)|\,d\sigma\ge c:=\frac\pi 2
\]
for all $(x,y)\in\E(t)$, whence we get 
\begin{eqnarray}\label{apio}
\int_x^y \kappa (\kappa + g)|\gamma_x(\sigma,t)|\,d\sigma &\ge & \frac{1}{L_{x,y}}\left(\int_x^y |\kappa||\gamma_x(\sigma,t)|\,d\sigma\right)^2
-\| g\|_\infty\int_x^y |\kappa||\gamma_x(\sigma,t)|\,d\sigma \nonumber \\ &\ge&  c\left(\frac{c}{L_{x,y}}-\| g\|_\infty\right)\geq 0 
\end{eqnarray}
whenever $L_{x,y}\le c/\| g\|_\infty$. 
Moreover, reasoning as in \cite{Hu:98}, from the minimality condition it follows that 
\begin{eqnarray}\label{sipio}
\left(\kappa(x)\nu(x)-\kappa(y)\nu(y)\right)\cdot (x-y)\ge 0,
\end{eqnarray}
for all $(x,y)\in\E(t)$. When $\eta(t)<\sqrt 2/2$, using \eqref{apio} and \eqref{sipio} we then compute
\begin{eqnarray}\label{etap}
\nonumber
\dot\eta \!\! &=& \!\!\!\min_{(x,y)\in\E}
\frac{1}{L_{x,y}}\left(\left[(\kappa(x)+g(x))\nu(x)-(\kappa(y)+g(y))\nu(y)\right]\cdot\frac{(x-y)}{|x-y|}
+\eta\!\!\int_x^y \!\!\kappa(\kappa+g)|\gamma_x|\,d\sigma\right)
\\
\!\! &\ge& \!\!\!\min_{(x,y)\in\E} -\frac{2\|g\|_\infty}{L_{x,y}}+
\frac{c\eta}{L_{x,y}}\left(\frac{c}{L_{x,y}}-\| g\|_\infty\right)
\\ \nonumber 
\!\! &\ge& \!\!\!\min_{(x,y)\in\E}-\left(2+\frac{\pi\sqrt 2}{4}\right)\frac{\|g\|_\infty}{L_{x,y}}
+ \frac{\pi^2}{4L_{x,y}^2}\,\eta\,.
\end{eqnarray}
\begin{theorem}\label{teoemb}
Let $\gamma_0$ be an embedding and let $T$ be the maximal existence time of \eqref{eqpar}. 
Then 
\begin{equation}\label{eqtime}
T \ge c\left( \gamma_0,\| g\|_\infty\right)\,.
\end{equation}
\end{theorem}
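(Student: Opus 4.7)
The plan is to exploit the chord-arc quantity $\eta(t)$ from \eqref{defeta}, together with the differential inequality \eqref{etap} just derived, and combine this with the singularity classification of the preceding subsections. Since $\gamma_0$ is an embedding we have $\eta(0)>0$, and the idea is to keep $\eta$ bounded below on a time window depending only on $\gamma_0$ and $\|g\|_\infty$; together with the Type I bound in Remark \ref{remuno}, this rules out both kinds of singularities on such a window.

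The first step is to extract a scalar ODE bound from \eqref{etap}. Viewed as a function of $L=L_{x,y}>0$, its right-hand side has the form $-A/L+(\pi^{2}\eta/4)/L^{2}$ with $A=(2+\pi\sqrt 2/4)\|g\|_\infty$, and its minimum over $L>0$ equals $-A^{2}/(\pi^{2}\eta)$. Hence, as long as $\eta(t)<\sqrt 2/2$,
\begin{equation*}
\dot\eta \ \ge\ -\frac{A^{2}}{\pi^{2}\,\eta}\,;
\end{equation*}
multiplying by $\eta$ and integrating yields $\eta(t)^{2}\ge\eta(0)^{2}-(2A^{2}/\pi^{2})\,t$, while the complementary regime $\eta\ge\sqrt 2/2$ requires no argument because $\eta(0)\le 1$. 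Since $\eta(0)$ depends only on $\gamma_0$, this produces a time $T_{1}=T_{1}(\gamma_0,\|g\|_\infty)>0$ on which $\eta(t)\ge\eta(0)/2$.

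To conclude I would argue by contradiction: suppose $T<T^{*}:=\min(T_{1},c_{0})$, where $c_{0}=c(\|\kappa_0\|_{L^\infty},\|g\|_{L^\infty})$ is the constant of Remark \ref{remuno}. By Proposition \ref{propex} the curvature blows up at $T$, so the singularity is of Type I or of Type II. Type I is immediately incompatible with $T<c_0$ thanks to Remark \ref{remuno}. If it is of Type II, the preceding subsection produces rescaled curves $\gamma_n$ converging smoothly on compact subsets of $\R^{2}$ to the Grim Reaper $\gamma_\infty$; since $\eta$ is invariant under translation and dilation, $\eta(\gamma_n)=\eta(t_n)\ge\eta(0)/2$. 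On the other hand, the Grim Reaper satisfies $\eta(\gamma_\infty)=0$, as witnessed by the pairs $(\pm a,-\log\cos a)$, whose chord stays bounded by $\pi$ while the arclength between them diverges as $a\to\pi/2$. Choosing such a pair inside a fixed compact set of $\R^{2}$ and using the $C^{0}$-convergence of $\gamma_n$ there, one transfers an arbitrarily small chord-arc ratio to $\gamma_n$, contradicting the lower bound. This last transfer is the only delicate point, since $\eta$ is an infimum over \emph{all} pairs; but, as just described, it reduces to the observation that the near-optimal pairs on $\gamma_\infty$ can be chosen in a fixed compact set.
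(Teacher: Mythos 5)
Your proof follows essentially the same route as the paper: exploit the chord-arc quantity $\eta$ of \eqref{defeta}, extract a scalar ODE bound from \eqref{etap}, and combine with the singularity classification (Type I via Remark \ref{remuno}, Type II via the Grim Reaper limit, for which $\eta$ vanishes). Two small points are worth tidying: the integrated bound $\eta(t)^2\ge\eta(0)^2-2A^2t/\pi^2$ is derived under $\eta<\sqrt 2/2$, so if $\eta(0)\ge\sqrt 2/2$ one should run the argument from the first time $\eta$ drops to $\sqrt 2/2$, i.e.\ replace $\eta(0)$ by $\bar\eta=\min(\eta(0),\sqrt 2/2)$ exactly as the paper does; and the near-optimal pairs $(\pm a,-\log\cos a)$ on the Grim Reaper escape to infinity as $a\to\pi/2$, so the "fixed compact set" must actually be chosen depending on the target ratio $\delta$ (fix $\delta$, choose $a$ and hence the compact set, then let $n\to\infty$), which is harmless but should be stated.
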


\begin{proof}
Remark \ref{remuno} assures that the statement is true if the 
evolution develops a type I singularity at $t=T$. 

Now we can assume that the evolution develops a type II singularity at $t=T$.
In particular it follows that $\eta(T)=0$. 
Let $\tau := \sup\{ t\in [0,T]:\,\eta(s)>0\ {\rm on\ }[0,t]\}$. Notice that $\tau>0$ 
due to the fact that $\gamma_0$ is an embedding.

The thesis will follow  if we show that $\tau$
is bounded below by a constant depending only on $\gamma_0$ and $\| g\|_\infty$. 
Since $\eta(\tau)=0$, we can find $0\le t_1<t_2\le \tau$ such that 
\[
\eta(t_1)=\bar\eta:=\min\left( \eta(0),\frac{\sqrt 2}{2}\right) 
\qquad \eta(t_2)=\frac{\bar \eta}{2} 
\qquad \eta(t)\in \left( \bar\eta,\frac{\bar\eta}{2}\right) 
\ {\rm for\ all\ }t\in \left(t_1,t_2\right).
\]
In particular, 
letting $a:= (2+\pi\sqrt 2/4)\| g\|_\infty$ and $b:= \pi^2/4$, from \eqref{etap} we have 
\[
\dot\eta \ge -\frac{a}{L_{x,y}} + \frac{b}{L_{x,y}^2}\eta \ge -\frac{a}{L_{x,y}} 
+ \frac{b\,\bar\eta}{2L_{x,y}^2} \ge -\frac{a^2}{2b\bar\eta}\,,
\]
which implies
\begin{equation*}
\tau\ge t_2-t_1\ge \frac{b\bar\eta^2}{a^2} = \frac{2\bar\eta^2}{\left( 1+\frac{4\sqrt 2}{\pi}\right)^2\| g\|^2_\infty}\,.
\qedhere\end{equation*}
\end{proof}

\subsection{The graph case}\label{secgr}
We assume 
now  that the curve can be parametrized as $\gamma( x, t)= ( x,u( x,t))$, 
with $ x\in [0,1]$ and $u\in C^\infty([0,1])$, with the following periodic-type boundary conditions:
\begin{eqnarray*}
u(0,t)-u(0,0) &=& u(1,t)-u(1,0) 
\\
u_x(0,t) &=& u_x(1,t)
\end{eqnarray*}
for all $t\in [0,T]$.

Notice that $\gamma$ is not a closed curve, and can be extended to a periodic infinite curve.
In this parametrization, equation \eqref{eqpar} becomes
\begin{equation}\label{eqgraph}
u_t =  \frac{u_{ x x}}{1+u_{ x}^2} + g( x,u( x)) \sqrt{1+u_{ x}^2}\,.
\end{equation}
We say that $\gamma$ solves \eqref{eqgraph} if $\gamma( x,t)= ( x,u( x,t))$, 
where the function $u$ solves \eqref{eqgraph}. 

Let us recall the following interpolation inequalities~\cite{Ni:59}.

\begin{proposition}\label{pronir}
Let $u\in H^1([0,1])\cap L^p([0,1])$, with $p\in[2,+\infty]$.
We have 
\begin{equation}\label{int0}
    {\Vert u\Vert}_{L^p}
    \leq C_{p}\,
      {\Vert u_x\Vert}_{L^2}^{\frac{p-2}{2p}}
      {\Vert u\Vert}_{L^2}^{\frac{p+2}{2p}} + B_p  \,{\Vert u\Vert}_{L^2}\,.
  \end{equation}
where the constants $C_{p},B_p$ depend only on $p$.
\end{proposition}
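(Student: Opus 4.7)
The plan is to reduce the general $p$ case to the $p=\infty$ case and then derive the endpoint inequality from the fundamental theorem of calculus. This is the classical Gagliardo--Nirenberg route, and the exponents work out correctly: note that $\frac{p-2}{2p}+\frac{p+2}{2p}=1$, so the right-hand side has the right homogeneity under $u\mapsto \lambda u$.

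First I would establish the endpoint $p=\infty$. By the mean value theorem for integrals, on $[0,1]$ there exists $y$ with $u(y)^2=\|u\|_{L^2}^2$. For any $x\in[0,1]$, writing
\begin{equation*}
u(x)^2 = u(y)^2 + 2\int_y^x u(t)\,u_x(t)\,dt
\end{equation*}
and applying Cauchy--Schwarz to the last integral gives
\begin{equation*}
\|u\|_{L^\infty}^2 \le \|u\|_{L^2}^2 + 2\|u\|_{L^2}\|u_x\|_{L^2},
\end{equation*}
from which
\begin{equation*}
\|u\|_{L^\infty} \le \sqrt{2}\,\|u_x\|_{L^2}^{1/2}\|u\|_{L^2}^{1/2} + \|u\|_{L^2}.
\end{equation*}
This is the case $p=\infty$ of \eqref{int0}, with explicit constants $C_\infty=\sqrt{2}$, $B_\infty=1$.

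Next I would interpolate. For $p\in[2,\infty)$, the elementary bound
\begin{equation*}
\|u\|_{L^p}^p \;=\; \int_0^1 |u|^{p-2}|u|^2\,dx \;\le\; \|u\|_{L^\infty}^{p-2}\,\|u\|_{L^2}^2
\end{equation*}
gives $\|u\|_{L^p}\le \|u\|_{L^\infty}^{(p-2)/p}\|u\|_{L^2}^{2/p}$. Plugging in the endpoint estimate and using the subadditivity $(a+b)^\alpha\le a^\alpha+b^\alpha$ valid for $\alpha=\tfrac{p-2}{p}\in[0,1]$, I would get
\begin{equation*}
\|u\|_{L^p} \;\le\; 2^{(p-2)/(2p)}\,\|u_x\|_{L^2}^{(p-2)/(2p)}\,\|u\|_{L^2}^{(p+2)/(2p)} + \|u\|_{L^2},
\end{equation*}
which is exactly \eqref{int0} with $C_p=2^{(p-2)/(2p)}$ and $B_p=1$.

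There is no real obstacle here: the main point is that on a bounded interval the $L^2$ correction term $B_p\|u\|_{L^2}$ is needed because the homogeneous Gagliardo--Nirenberg inequality of $\R$-type only controls $\|u-\bar u\|$, and one must pay this additive price to select a reference point $y$ via the mean value argument. The only step requiring mild care is ensuring that the subadditivity inequality is applied with exponent $\le 1$, which is precisely why the statement restricts to $p\ge 2$.
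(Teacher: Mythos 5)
Your proof is correct. The paper does not actually prove Proposition \ref{pronir}: it simply recalls it with a citation to Nirenberg's 1959 paper, so there is no internal argument to compare against. Your self-contained derivation is the standard elementary route for the one-dimensional case and it is carried out cleanly: the mean value theorem for integrals selects a point $y$ with $u(y)^2 = \|u\|_{L^2}^2$ (using that $u\in H^1([0,1])$ is continuous), the fundamental theorem of calculus plus Cauchy--Schwarz gives the $L^\infty$ endpoint with $C_\infty=\sqrt 2$, $B_\infty=1$, and the H\"older-type bound $\|u\|_{L^p}\le \|u\|_{L^\infty}^{(p-2)/p}\|u\|_{L^2}^{2/p}$ together with subadditivity of $t\mapsto t^{(p-2)/p}$ interpolates down to any $p\in[2,\infty)$, with the exponents on $\|u\|_{L^2}$ collapsing correctly to $(p+2)/(2p)$ and $1$. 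Your closing remark about why the additive $B_p\|u\|_{L^2}$ term is unavoidable on a bounded interval (one cannot fix a zero of $u$ for free) is a fair and correct observation, and the restriction $p\ge 2$ is indeed exactly what makes the subadditivity step legitimate.

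One very minor point worth tightening if you were to write this up formally: the identity $u(x)^2 - u(y)^2 = 2\int_y^x u\,u_x\,dt$ for $u\in H^1$ deserves a one-line justification (e.g.\ $u$ is absolutely continuous on $[0,1]$, $uu_x\in L^1$, and the chain rule holds for Sobolev functions in one dimension), but this is routine.
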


The following inequalities can be easily derived from Proposition \ref{pronir} (see \cite{aubin0}).

\begin{proposition}\label{propinter}
Let $z$ be a smooth function defined on $\gamma$, where $\gamma$ solves \eqref{eqgraph}, 
and let $p\in[2,+\infty]$. We have 
\begin{equation}\label{int1}
    {\Vert z\Vert}_{L^p}
    \leq C_{p}\,
      {\Vert z_s\Vert}_{L^2}^{\frac{p-2}{2p}}
      {\Vert z\Vert}_{L^2}^{\frac{p+2}{2p}}+ B_p\,{\Vert z\Vert}_{L^2}\,.
  \end{equation}
where the constants $C_{p}, B_p$ depend on $p$ but are independent of $\gamma$.
\end{proposition}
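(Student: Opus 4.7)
The plan is to deduce Proposition \ref{propinter} from Proposition \ref{pronir} by transporting $z$ from the curve to the interval $[0,1]$ via the arclength parametrization, composed with a linear rescaling. The crucial geometric point that makes this go through with constants independent of $\gamma$ is that $\gamma$ is a graph over $[0,1]$, hence $|\gamma_x| = \sqrt{1 + u_x^2} \ge 1$, which gives the length bound $L := L_t(\gamma) \ge 1$.

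First I would fix $p \in [2,+\infty]$, let $L = L_t(\gamma)$, and view $z$ as a function of the arclength parameter $s \in [0,L]$. Then I would set $\tilde z(y) := z(Ly)$ for $y \in [0,1]$, and record the standard scaling relations
\begin{equation*}
\|\tilde z\|_{L^p([0,1])} = L^{-1/p}\|z\|_{L^p(\gamma)}, \qquad
\|\tilde z\|_{L^2([0,1])} = L^{-1/2}\|z\|_{L^2(\gamma)}, \qquad
\|\tilde z_y\|_{L^2([0,1])} = L^{1/2}\|z_s\|_{L^2(\gamma)},
\end{equation*}
with the obvious modification $\|\tilde z\|_{L^\infty([0,1])} = \|z\|_{L^\infty(\gamma)}$ when $p = \infty$.

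Next I would apply Proposition \ref{pronir} to $\tilde z$ on $[0,1]$ and substitute the relations above to obtain
\begin{equation*}
L^{-1/p}\|z\|_{L^p(\gamma)} \le C_p\,\bigl(L^{1/2}\|z_s\|_{L^2}\bigr)^{\frac{p-2}{2p}}\bigl(L^{-1/2}\|z\|_{L^2}\bigr)^{\frac{p+2}{2p}} + B_p\,L^{-1/2}\|z\|_{L^2}.
\end{equation*}
A direct computation shows the powers of $L$ in the first term on the right collapse to $L^{-1/p}$, so after multiplying through by $L^{1/p}$ one gets exactly the inequality \eqref{int1} apart from an extra factor $L^{\,1/p - 1/2}$ in front of the $B_p\|z\|_{L^2}$ summand.

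The final step, which is also where the graph hypothesis is used, is to observe that since $|\gamma_x| \ge 1$ on $[0,1]$ we have $L \ge 1$, and because $1/p - 1/2 \le 0$ for $p \ge 2$, the offending factor satisfies $L^{1/p-1/2} \le 1$ and can be discarded, yielding constants $C_p$, $B_p$ independent of $\gamma$. I do not expect any serious obstacle here: the whole argument is a scaling computation, and the only nontrivial ingredient is the uniform lower bound $L \ge 1$ provided by the graph structure. The case $p = \infty$ is completely analogous, the only change being that the $L^{-1/p}$ factors are replaced by $1$ throughout.
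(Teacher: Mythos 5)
Your proof is correct and follows the route the paper intends: the paper simply remarks that \eqref{int1} ``can be easily derived from Proposition \ref{pronir}'' with a citation to Aubin, and the natural way to do this is precisely your scaling argument from $[0,L]$ to $[0,1]$, with the graph hypothesis supplying the crucial bound $L = \int_0^1\sqrt{1+u_x^2}\,dx \ge 1$ that makes the residual factor $L^{1/p-1/2}\le 1$ harmless. The scaling exponents you compute are right, and the $p=\infty$ limit is handled consistently.
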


In particular, choosing $p=4$, \eqref{int1} becomes
\begin{equation}\label{int4}
    \int_\gamma z^4\,ds = {\Vert z\Vert}_{L^4}^4
    \leq C \left(
      {\Vert z_s\Vert}_{L^2}
      {\Vert z\Vert}_{L^2}^{3} +{\Vert z\Vert}_{L^2}^{4}\right)\,.
\end{equation}

\begin{lemma}\label{lemest}
Let $u$ be a smooth solution of \eqref{eqgraph}, and let 
\[
F(x):=\int_0^x \arctan(t)\, dt = x\arctan(x) - \log\sqrt{1+x^2}\,.
\] 
We have
\begin{eqnarray}\label{equno}
\partial_t \int_0^1 \sqrt{1+u_{ x}^2}\,dx &\le& C \int_0^1 \sqrt{1+u_{ x}^2}\,dx
\\ \label{eqdue}
\partial_t \int_0^1 F(u_{ x})\, dx &\le& C \int_0^1 \left(1+u_{ x}^2\right)\,dx
\\
\label{eqtre}
\partial_t \int_0^1 \left(\sqrt{1+u_{ x}^2}\right)^3 \,dx &\le& 
C + C \left( \int_0^1 \left(\sqrt{1+u_{ x}^2}\right)^3\,dx\right)^3, 
\end{eqnarray}
where the constants $C>0$ depend only on $\| g\|_{L^\infty}$.
\end{lemma}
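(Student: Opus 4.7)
The plan is to handle all three inequalities by one common scheme. Set $v:=\sqrt{1+u_x^2}$; the periodic-type boundary conditions give $u_t(0,t)=u_t(1,t)$ and $u_x(0,t)=u_x(1,t)$, so every integration by parts in $x$ below has vanishing boundary contributions. In each case I would differentiate the integrand in $t$, integrate by parts once to transfer the derivative from $u_{xt}$ onto $u_t$, substitute the graph equation $u_t = u_{xx}/v^2 + gv$, and then split the result into a non-positive ``diffusive'' quadratic in $u_{xx}$ plus a $g$-dependent cross term that is controlled by Cauchy--Schwarz and Young.

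The first two estimates are essentially immediate. Inequality \eqref{equno} is nothing but the length estimate \eqref{paris} of Corollary \ref{corper}, since $\int_0^1 v\,dx = L_t(\gamma)$. For \eqref{eqdue}, the choice of $F$ is tailored so that $F'(u_x) = \arctan(u_x)$ and $(\arctan u_x)_x = u_{xx}/v^2$, precisely the coefficient of $u_t$ in the PDE; integration by parts and substitution give
\[
\partial_t \int_0^1 F(u_x)\,dx \;=\; -\int_0^1 \frac{u_{xx}^2}{v^4}\,dx \;-\; \int_0^1 \frac{u_{xx}\,g}{v}\,dx,
\]
and a single Young inequality absorbs the cross term into the diffusive one at the cost of $\tfrac{\|g\|_\infty^2}{2}\int v^2\,dx$, which yields \eqref{eqdue}.

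The real content is in \eqref{eqtre}. Setting $I := \int_0^1 v^3\,dx$, differentiation and one integration by parts (using $(v u_x)_x = u_{xx}(1+2u_x^2)/v$) lead to
\[
\partial_t I \;=\; -3\int_0^1 \frac{u_{xx}^2(1+2u_x^2)}{v^3}\,dx \;-\; 3\int_0^1 u_{xx}(1+2u_x^2)\,g\,dx.
\]
Cauchy--Schwarz on the cross term, the elementary bound $1+2u_x^2 \le 2v^2$, and a Young step absorb part of the diffusive integral, reducing the estimate to
\[
\partial_t I \;\le\; -\tfrac{3}{2}\int_0^1 \frac{u_{xx}^2(1+2u_x^2)}{v^3}\,dx \;+\; C\,\|g\|_\infty^2 \int_0^1 v^5\,dx.
\]
The hard part---and the only nontrivial step of the proof---is to dominate $\int v^5$ by powers of $I$ alone, up to some of the remaining diffusive integral. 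This is where Proposition \ref{propinter} enters. Converting to arclength via $ds = v\,dx$ gives $\int_0^1 v^5\,dx = \int_\gamma v^4\,ds$ and $I = \int_\gamma v^2\,ds$; applying \eqref{int4} to $z = v$ yields $\int_\gamma v^4\,ds \le C\bigl(\|v_s\|_{L^2(\gamma)} I^{3/2} + I^2\bigr)$. Since $v_s = u_x u_{xx}/v^2$, the gradient term $\int_\gamma v_s^2\,ds = \int_0^1 u_x^2 u_{xx}^2/v^3\,dx$ is dominated by the surviving diffusive integral (using $u_x^2 \le 1+2u_x^2$), and a final Young inequality absorbs it at the cost of a $CI^3$ contribution. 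Since $v\ge 1$ forces $I\ge 1$, the leftover $I^2$ is dominated by $1+I^3$, and \eqref{eqtre} follows. The main obstacle is really just to align the weight $1+2u_x^2$ produced by integration by parts with the interpolation inequality applied to $v$; once this pairing is set up correctly, the proof closes through two successive Young inequalities.
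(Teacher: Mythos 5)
Your proposal is correct and follows essentially the same strategy as the paper's proof. The only cosmetic difference is in \eqref{eqtre}: the paper sets $z=v$ and works in arclength throughout (using $\partial_t\,ds=-\kappa(\kappa+g)\,ds$, the Frenet formulas, and $z_t=-(\kappa+g)_s z^2\,\nu\cdot e_2$ to reach $\partial_t\int_\gamma z^2\,ds \le -3\|z_s\|_{L^2}^2 + C\|g\|_{L^\infty}^2\int_\gamma z^4\,ds$), whereas you compute directly in the $x$-variable, arriving at the equivalent identity with the diffusive weight $u_{xx}^2(1+2u_x^2)/v^3$ and only passing to arclength when invoking the interpolation inequality \eqref{int4}. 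Both routes close by the same pair of Young inequalities, and your identification $\int_\gamma v_s^2\,ds=\int_0^1 u_x^2u_{xx}^2/v^3\,dx \le \int_0^1 u_{xx}^2(1+2u_x^2)/v^3\,dx$ is exactly what makes the interpolation term absorbable by the surviving diffusion, just as in the paper. The treatment of \eqref{equno} and \eqref{eqdue} is also the same as in the paper.
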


\begin{proof}
Inequality \eqref{equno} can be obtained exactly as \eqref{eqL}.

In order to show \eqref{eqdue}, we compute

\begin{eqnarray}\label{equt}
\nonumber 
\partial_t \int_0^1 F(u_{ x})\,dx &=& \int_0^1  -u_t (\arctan u_{ x})_{ x}\,dx
\\ 
&=& \int_0^1  - u_t^2 + g u_t \sqrt{1+u_{ x}^2}\,dx
\\ \nonumber 
&\le& \int_0^1      \frac{g^2}{4} \left(1+u_{ x}^2\right)\,dx
\end{eqnarray}
which leads to \eqref{eqdue}.

We now prove \eqref{eqtre}.
Letting $e_2=(0,1)\in\R^2$ and $z:= 1/(\tau\cdot e_2) = \sqrt{1+u_{ x}^2}$, from \eqref{eqkg5} we get
\begin{equation}\label{eqz}
z_t = - (\kappa+g)_s z^2 \nu\cdot e_2.
\end{equation}
We compute
\begin{eqnarray*}
\partial_t \int_\gamma z^2 \,ds &=& \int_\gamma 2zz_t - \kappa(\kappa+g)z^2 \,ds
\\
&=& \int_\gamma -2z^3(\kappa+g)_s \nu\cdot e_2 - \kappa(\kappa+g)z^2\,ds
\\
&=& \int_\gamma (\kappa+g)\left( -3\kappa z^2 + 6z^2 z_s \nu\cdot e_2\right)\,ds
\\
&=& 3 \int_\gamma (\kappa+g)z_s\frac{1+2z^2(\nu\cdot e_2)^2}{\nu\cdot e_2}\,ds
\\
&=& 3\int_\gamma - \frac{2z^2-1}{z^2-1} z_s^2 + g z_s \frac{2z^2-1}{\nu\cdot e_2}\,ds
\\
&\le& 3\int_\gamma -\frac{2z^2-1}{z^2-1} z_s^2 + \left(gz\sqrt{2z^2-1}\right)
\left(\sqrt{\frac{2z^2-1}{z^2-1}}\,z_s\right)\,ds
\\
&\le& 3\int_\gamma -z_s^2 + \frac{\|g\|_{L^\infty}^2}{2} z^2 (2z^2 -1)\,ds
\\
&\le& 3\int_\gamma -z_s^2 + \|g\|_{L^\infty}^2 z^4\,ds
\\
&\le& - 3\| z_s\|_{L^2}^2 + C \|g\|_{L^\infty}^2 \left( \| z_s\|_{L^2} \| z\|_{L^2}^{3} + \| z\|_{L^2}^{4}\right)
\\
&\le& C\,\|g\|_{L^\infty}^4 \left( \int_\gamma z^2\,ds\right)^3 + C \|g\|_{L^\infty}^2 \left( \int_\gamma z^2\,ds\right)^2,
\end{eqnarray*}
where we used \eqref{int4} to estimate $\| z\|_{L^4}$. 
\end{proof}

\begin{proposition}\label{progi}
Let $g( x,y)\in C^\infty([0,1]^2)\cap L^\infty([0,1]^2)$, and let $u_0\in C^\infty([0,1])$, with 
$u_0(0)=u_0(1)$. Then, there exists $T>0$ depending only on $\|u_0\|_{W^{1,\infty}}$ and $\|g\|_{L^\infty}$ 
such that equation \eqref{eqgraph} admits a smooth solution $u\in C^\infty([0,1]\times [0,T])$. 

Moreover    $\|u(t, \cdot)\|_{ H^1([0,1]))}\leq K$ for every $t\in[0, T]$, where $K$ depends only on $\|u_0\|_{W^{1,\infty}}$ and $\|g\|_{L^\infty}$.
\end{proposition}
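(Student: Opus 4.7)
My approach is to reduce the problem to the parametric flow, invoke Theorem \ref{teoemb} for a uniform lower bound on the parametric existence time depending only on $\|g\|_\infty$, and use the estimate \eqref{eqtre} of Lemma \ref{lemest} both to control the graph parametrization and to produce the $H^1$ bound. Set $\gamma_0(x)=(x,u_0(x))$, a smooth embedded curve. A direct estimate using $|\gamma_0(x)-\gamma_0(y)|\ge|x-y|$ and $L_{x,y}(0)\le|x-y|\sqrt{1+\|u_0'\|_\infty^2}$ gives the lower bound $\eta(0)\ge(1+\|u_0'\|_\infty^2)^{-1/2}$ on the embedding constant of \eqref{defeta}. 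Theorem \ref{smallex} produces a smooth parametric solution $\gamma$ on a maximal interval $[0,T^*)$, and Theorem \ref{teoemb} then yields $T^*\ge T_1$ with $T_1$ depending only on $\|u_0\|_{W^{1,\infty}}$ and $\|g\|_\infty$.

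Let $T_g\in(0,T^*]$ be the largest time for which $\gamma(\cdot,t)$ remains a graph over the $x$-axis; on $[0,T_g)$ we may write $\gamma(x,t)=(x,u(x,t))$ with $u\in C^\infty$ solving \eqref{eqgraph}. By \eqref{eqtre}, the quantity $y(t):=\int_0^1(1+u_x^2)^{3/2}\,dx$ satisfies $\dot y\le C(\|g\|_\infty)(1+y^3)$ with $y(0)\le(1+\|u_0'\|_\infty^2)^{3/2}$; comparison with this ODE produces constants $T_2,Y>0$, depending only on $\|u_0\|_{W^{1,\infty}}$ and $\|g\|_\infty$, such that $y(t)\le Y$ on $[0,\min(T_g,T_2))$. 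Set $T:=\min(T_1,T_2)$.

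The key—and most delicate—step is to verify $T_g\ge T$. I would argue by contradiction: assume $T_g<T$. Since $T_g<T_1\le T^*$, Proposition \ref{propex} yields a finite bound $\|\kappa\|_\infty\le K$ on $[0,T_g]$; the constant $K$ may depend on higher-order norms of $u_0$, but only its finiteness is needed. Then $u_{xx}=\kappa(1+u_x^2)^{3/2}$ gives $\|u_{xx}(\cdot,t)\|_{L^1}\le KY$, and since the boundary conditions together with $u_0(0)=u_0(1)$ force $\int_0^1 u_x\,dx=u(1,t)-u(0,t)\equiv 0$, the fundamental theorem of calculus yields $\|u_x(\cdot,t)\|_\infty\le\|u_{xx}(\cdot,t)\|_{L^1}\le KY$ uniformly on $[0,T_g)$. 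This contradicts the blow-up $\|u_x\|_\infty\to\infty$ as $t\nearrow T_g$ that must occur, lest local solvability of \eqref{eqgraph} extend $u$ past $T_g$. Hence $T_g\ge T$. The $H^1$ bound on $[0,T]$ then follows routinely: $\|u_x\|_{L^2}^2\le Y$ is direct, while integrating \eqref{eqgraph} and using $u_x(0,t)=u_x(1,t)$ shows that the mean $\bar u$ obeys $\partial_t\bar u=\int g\sqrt{1+u_x^2}\,dx$, bounded by $\|g\|_\infty L_t(\gamma)$ via Corollary \ref{corper}; combined with $\|u-\bar u\|_\infty\le\|u_x\|_{L^1}\le\sqrt Y$ this yields the uniform $H^1$ bound.
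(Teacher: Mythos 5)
Your proof is correct and follows the same overall skeleton as the paper's (use Theorem~\ref{teoemb} to get a uniform lower bound on the parametric existence time, then use \eqref{eqtre} for the quantitative gradient estimate), but it handles the crucial graph-persistence step differently and, in my view, more carefully. The paper's proof asserts that $\kappa\in L^\infty([0,1]\times[0,T])$ and then writes ``by equation \eqref{eqw} we also get $u_x\in L^\infty$''. That step is terse: \eqref{eqw} governs $\log|\gamma_x|$ for the pure-normal parametrization solving \eqref{eqpar}, not for the graph parametrization $(x,u(x,t))$, whose velocity has a tangential component; a bound on $|\gamma_x|$ for the normal flow controls the arclength density of the original labelling, not the slope $u_x$ pointwise. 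You avoid this issue entirely by exploiting the specific structure of the graph case: since $u_0(0)=u_0(1)$ and the periodic boundary conditions force $\int_0^1 u_x\,dx=0$, you combine the finite curvature bound on $[0,T_g]\subset[0,T^*)$ (whose constant is allowed to be non-uniform) with the uniform bound $\int_0^1(1+u_x^2)^{3/2}\,dx\le Y$ from \eqref{eqtre} to get $\|u_{xx}\|_{L^1}\le KY$ and hence $\|u_x\|_\infty\le KY$, ruling out slope blow-up before the parametric time $T$. This is a genuine repair of a soft spot in the paper's argument, and it still delivers the $H^1$ bound with constants depending only on $\|u_0\|_{W^{1,\infty}}$ and $\|g\|_\infty$, as required; your handling of the zero-mode $\bar u$ via integrating \eqref{eqgraph} and Corollary~\ref{corper} is also a cleaner route to the $L^2$ control of $u$ itself than the paper's passing remark.

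One small point of precision: when you invoke the ODE comparison for $y(t)=\int_0^1(1+u_x^2)^{3/2}\,dx$, the constants $T_2$ and $Y$ should be chosen so that $T_2$ is strictly below the blow-up time of the supersolution $\dot Y=C(1+Y^3)$, $Y(0)=(1+\|u_0'\|_\infty^2)^{3/2}$, and $Y=Y(T_2)$; you state this implicitly, and it does depend only on $\|u_0\|_{W^{1,\infty}}$ and $\|g\|_\infty$, so the argument goes through.
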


\begin{proof}
By standard parabolic regularity theory \cite{LSU} , it is enough to show that the gradient 
$u_x$ remains bounded for a time $T$ as above.   Theorem \ref{teoemb} gives  
that $\kappa = u_{xx}(1+u_x^2)^{-3/2}\in L^\infty([0,1]\times [0,T])$, for $T $  depending only on $\|u_0\|_{W^{1,\infty}}$ and $\|g\|_{L^\infty}$.
Therefore, by equation \eqref{eqw}, we also get $u_x\in L^\infty([0,1]\times [0,T])$. 

Moreover from   \eqref{eqtre}  we obtain that, eventually choosing a smaller $T$ always depending only on  $\|u_0\|_{W^{1,\infty}}$ and $\|g\|_{L^\infty}$, $u_x\in L^3([0,1]\times [0,T ])$ and then also $u_x\in L^2([0,1]\times [0,T])$,  with norm bounded by a constant  depending only on $T$ and $\|g\|_\infty$. 
\end{proof}

\begin{lemma}\label{lemreg}
We have the continuous embedding 
\[
H^1([0,T],L^2([0,1]))\cap L^\infty([0,T],H^1([0,1]))\hookrightarrow  
C^{\frac 1 2,\frac 1 4}([0,1]\times [0,T]).
\]
\end{lemma}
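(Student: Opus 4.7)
The plan is to estimate separately the oscillation of $u$ in space at a fixed time and the oscillation in time at a fixed spatial point, and then combine the two by the triangle inequality.

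\textbf{Spatial regularity.} For almost every $t\in[0,T]$ we have $u(\cdot,t)\in H^1([0,1])$ with $\|u_x(\cdot,t)\|_{L^2}\le C:=\|u\|_{L^\infty([0,T],H^1([0,1]))}$. Since in one space dimension $H^1\hookrightarrow C^{1/2}$, Cauchy--Schwarz gives
\[
|u(x_1,t)-u(x_2,t)|=\left|\int_{x_1}^{x_2}u_x(\xi,t)\,d\xi\right|\le \|u_x(\cdot,t)\|_{L^2}\,|x_1-x_2|^{1/2}\le C\,|x_1-x_2|^{1/2},
\]
which supplies the $1/2$-Hölder control in $x$.

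\textbf{Temporal regularity.} For $0\le t_1<t_2\le T$, write $v(x):=u(x,t_2)-u(x,t_1)$ and use $u_t\in L^2([0,T]\times[0,1])$ together with Cauchy--Schwarz in time to obtain
\[
\|v\|_{L^2([0,1])}^2\le (t_2-t_1)\int_0^1\!\!\int_{t_1}^{t_2}u_t^2(x,s)\,ds\,dx\le C\,(t_2-t_1).
\]
Meanwhile the uniform spatial $H^1$-bound implies $\|v_x\|_{L^2}\le 2C$, hence $\|v\|_{H^1([0,1])}\le C'$ (here we use $t_2-t_1\le T$ to absorb the $L^2$ part). Now apply the one-dimensional Gagliardo--Nirenberg inequality $\|v\|_{L^\infty}\le C\,\|v\|_{L^2}^{1/2}\|v\|_{H^1}^{1/2}$ to conclude
\[
|u(x,t_2)-u(x,t_1)|\le \|v\|_{L^\infty([0,1])}\le C\bigl((t_2-t_1)^{1/2}\bigr)^{1/2}\cdot(C')^{1/2}\le C''\,(t_2-t_1)^{1/4}.
\]

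\textbf{Conclusion.} Combining the two estimates via
\[
|u(x_1,t_1)-u(x_2,t_2)|\le |u(x_1,t_1)-u(x_2,t_1)|+|u(x_2,t_1)-u(x_2,t_2)|\le C\bigl(|x_1-x_2|^{1/2}+|t_1-t_2|^{1/4}\bigr)
\]
yields the claimed embedding, with the embedding constant controlled by $\|u\|_{H^1_tL^2_x}+\|u\|_{L^\infty_t H^1_x}$. There is no genuine obstacle here: the whole argument rests on one-dimensional Sobolev interpolation, and the matched parabolic scaling $2\cdot\tfrac14=\tfrac12$ falls out of the trade-off between the $L^2$-in-time-difference estimate (square-root in $t_2-t_1$) and the Gagliardo--Nirenberg interpolation (extra square-root loss).
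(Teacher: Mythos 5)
Your argument is correct and follows essentially the same route as the paper: a Cauchy--Schwarz spatial estimate giving $C^{1/2}$ in $x$, an $L^2$-in-time-difference bound from $u_t\in L^2_tL^2_x$, and then the one-dimensional Agmon/Gagliardo--Nirenberg interpolation $\|v\|_{L^\infty}\lesssim\|v\|_{L^2}^{1/2}\|v\|_{H^1}^{1/2}$ to convert the $(t_2-t_1)^{1/2}$ $L^2$-gain into a $(t_2-t_1)^{1/4}$ $L^\infty$-gain. The only cosmetic difference is that you bound $\|v\|_{H^1}$ crudely by a constant using $t_2-t_1\le T$, whereas the paper writes out the Nirenberg inequality in the form $\|v\|_{L^\infty}\lesssim\|v_x\|_{L^2}^{1/2}\|v\|_{L^2}^{1/2}+\|v\|_{L^2}$; both give the same exponent.
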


\begin{proof}
Let $u\in H^1([0,T],L^2([0,1]))\cap L^\infty([0,T],H^1([0,1]))$, and let $(x,t),(y,s)\in [0,1]\times [0,T]$,
with $x<y$, $t<s$. Since $u\in L^\infty([0,T],H^1([0,1]))$, we have 
\begin{equation}\label{ast}
|u(x,t)-u(y,t)|\le \int_x^y |u_x|\,d\sigma\le C \sqrt{x-y}.
\end{equation}
Moreover, being also $u\in H^1([0,T],L^2([0,1]))$, we have
\[
\| u(\cdot,t)-u(\cdot,s)\|^2_{L^2}=
\int_0^1|u(x,t)-u(x,s)|^2\,dx\le (s-t)\int_{[0,1]\times[0,T]}u_t^2\,dxdt \le C(s-t).
\]
By \eqref{int0} with $p=\infty$, this implies 
\begin{eqnarray}\label{eat}
\nonumber 
\| u(\cdot,t)-u(\cdot,s)\|_{L^\infty} \!&\le& \!\!C\left( 
\| u_x(\cdot,t)-u_x(\cdot,s)\|^{\frac 1 2}_{L^2}\| u(\cdot,t)-u(\cdot,s)\|^{\frac 1 2}_{L^2}+
\| u(\cdot,t)-u(\cdot,s)\|_{L^2}\right)
\\ 
\!&\le&
\!\!C\,(s-t)^\frac 1 4.
\end{eqnarray}
The thesis follows from \eqref{ast} and \eqref{eat}.
\end{proof}

\begin{proposition}\label{provo}
Let $u_0\in H^1([0,1])$, with $u_0(0)=u_0(1)$, and 
let $g_n\in C^\infty([0,1]^2)\cap L^\infty([0,1]^2)$, with $\|g_n\|_{L^\infty}\leq C$ for every $n$, 
and let $u_n\in C^\infty([0,1]\times [0,T])$ be the solutions of \eqref{eqgraph} with $g=g_n$, 
given by Proposition \ref{progi}. Then there exists $u\in H^1([0,T],L^2([0,1]))\cap L^\infty([0,T],H^1([0,1]))$
such that $u_n\to u$, up to a subsequence, uniformly on $[0,1]\times [0,T]$.
\end{proposition}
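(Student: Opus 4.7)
The strategy is to derive uniform-in-$n$ bounds on $\{u_n\}$ in the spaces $L^\infty([0,T],H^1([0,1]))$ and $H^1([0,T],L^2([0,1]))$, then use the compact embedding of Lemma \ref{lemreg} to extract a uniformly convergent subsequence, and finally identify the limit by weak/weak-$*$ compactness arguments.

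For the spatial bound, I would combine \eqref{eqtre} with the uniform hypothesis $\|g_n\|_\infty\le C$. Setting $y_n(t):=\int_0^1\bigl(\sqrt{1+u_{n,x}^2}\bigr)^3\,dx$, the inequality takes the form $\dot y_n\le C_1(1+y_n^3)$ with a constant independent of $n$. A comparison argument bounds $y_n$ uniformly on a common time interval $[0,T]$ whose length depends only on $y_n(0)$ and $C$; by H\"older's inequality $\int_0^1 u_{n,x}^2\,dx\le y_n^{2/3}$, so the $L^\infty([0,T],H^1([0,1]))$ bound follows.

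For the temporal bound, I would revisit the identity established in the proof of \eqref{eqdue}, namely
\[
\partial_t \int_0^1 F(u_{n,x})\,dx + \int_0^1 u_{n,t}^2\,dx = \int_0^1 g_n\, u_{n,t}\,\sqrt{1+u_{n,x}^2}\,dx.
\]
Applying Young's inequality to the right-hand side to absorb half of $\int u_{n,t}^2$ and integrating on $[0,T]$, the spatial bound just obtained, together with $\int_0^1 F(u_{0,x})\,dx\le C(1+\|u_0\|_{H^1}^2)$, controls the remaining integrand, yielding a uniform bound on $\int_0^T\!\int_0^1 u_{n,t}^2\,dx\,dt$. With both bounds in hand, Lemma \ref{lemreg} makes $\{u_n\}$ equicontinuous and equibounded in $C^{1/2,1/4}([0,1]\times[0,T])$, so Arzel\`a--Ascoli yields a uniformly convergent subsequence $u_n\to u$; weak-$*$ compactness of $L^\infty([0,T],H^1([0,1]))$ and weak compactness of $H^1([0,T],L^2([0,1]))$ then place $u$ in the intersection of those spaces, with the uniform limit coinciding with the weak limit.

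The main obstacle is the mismatch between the hypothesis $u_0\in H^1$ and estimate \eqref{eqtre}, which a priori requires $y_n(0)=\int_0^1\bigl(\sqrt{1+u_{0,x}^2}\bigr)^3\,dx<+\infty$. I would handle this by first approximating $u_0$ in $H^1$ by smoother data $u_0^j$, running the above argument for each $j$ to produce limits $u^j$ on intervals $[0,T^j]$, and then verifying that after the spatial estimate is propagated to short positive times by parabolic regularization the time $T$ and the a priori bounds can be chosen to depend only on $\|u_0\|_{H^1}$ and $\|g\|_\infty$; a diagonal extraction in $(n,j)$ then delivers the desired limit $u$. Ensuring that this diagonalization respects the uniform time interval is where the technical delicacy lies.
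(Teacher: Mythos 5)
Your proof takes essentially the same route as the paper: the uniform $L^\infty([0,T],H^1)$ bound comes from Proposition \ref{progi} (which is itself built on \eqref{eqtre}), the uniform $H^1([0,T],L^2)$ bound comes from the identity underlying \eqref{equt} combined with Young's inequality, and the passage to a uniformly convergent subsequence with the stated regularity uses Lemma \ref{lemreg}, Arzel\`a--Ascoli, and weak/weak-$*$ compactness, exactly as in the paper's argument.

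The mismatch you flag at the end is a genuine imprecision in the paper rather than a gap you need to fill: Proposition \ref{progi} requires $u_0\in C^\infty$ and, via Theorem \ref{teoemb} and \eqref{eqtre}, produces a time $T$ and bounds depending on $\|u_0\|_{W^{1,\infty}}$ (indeed on $\int_0^1(1+u_{0,x}^2)^{3/2}\,dx$), not merely on $\|u_0\|_{H^1}$; nevertheless the proof of Proposition \ref{provo} asserts dependence on $\|u_0\|_{H^1}$ alone and cites Proposition \ref{progi} without further comment. The paper does not carry out the diagonal extraction in a second smoothing parameter that you sketch. In the places where Proposition \ref{provo} is actually used (Theorem \ref{teogi}, with $u_0\in W^{2,\infty}$, and the homogenization in Section \ref{secomo}, with Lipschitz $u_0$), the initial datum has enough regularity for Proposition \ref{progi} to apply directly, so the intended reading is that $u_0$ is smooth enough for Proposition \ref{progi} while the conclusion is phrased in the weaker spaces; your proposed approximation in $j$ would be the right device if one truly wanted to treat bare $H^1$ data, but then controlling a common $T$ across $j$ from $\|u_0^j\|_{H^1}$ alone is not something \eqref{eqtre} gives you, and the paper's own Remark \ref{remgi} only claims interior-in-time regularization for $u_0\in H^1$.
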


\begin{proof}
By Proposition \ref{progi} there exist $T>0$, depending only on $\|u_0\|_{H^{1}}$ and $C$,
such that the solutions $u_n$ are uniformly bounded in $L^\infty([0,T],H^1([0,1]))$. 
Moreover, using the equality for \eqref{equt}, we obtain
\[\int_0^1 \frac{(u_n)_t^2}{2
} dx\leq     \frac{\|g_n\|_{\infty}^2}{2}\int_0^1    \left(1+ {u_n}_{ x}^2\right)\,dx-\partial_t\int_0^1 F((u_n)_{ x})\,dx\]  
and integrating it in time   
we also get a uniform bound of $u_n$ in $H^1([0,T],L^2([0,1]))$.
It then follows that the sequence $u_n$ converges, up to a subsequence as $n\to  +\infty$, to a limit function 
$u$ in the weak topology of $H^1([0,T],L^2([0,1]))\cap L^\infty([0,T],H^1([0,1]))$.
The uniform convergence follows from Lemma \ref{lemreg}.
\end{proof}

We are interested in studying solutions of \eqref{eqgraph} when $g$ is only a $L^\infty$-function. We consider the simpler case in which $g$
is independent of $u$, i.e. $g(x,y)=g(x)$. In this case we define the following notion of weak solution. 
 
\begin{definition}\label{defw}
We say that a function $u\in H^1([0,T],L^2([0,1]))\cap L^\infty([0,T],H^1([0,1]))$ 
is a weak solution of \eqref{eqgraph} if 
\begin{equation}\label{eqdefw}
\int_{[0,1]\times [0,T]} \left( u_t \varphi 
+ \arctan(u_x)\varphi_x - g ( x )\sqrt{1+u_{ x}^2}\,\varphi\right) \,dxdt = 0
\end{equation}
for all test functions $\varphi \in C^1_c([0,1]\times(0,T))$, with periodic boundary conditions.
\end{definition}

We have the following existence theorem  for weak solution to  \eqref{eqgraph}.
\begin{theorem}\label{teogi}
Let $g( x,y)=g( x)$, with $g\in L^\infty([0,1])$, 
and let $u_0\in W^{2,infty}([0,1])$ with periodic boundary conditions. 
Then, there exists $T>0$ depending only on $u_0$ and $\|g\|_\infty$ 
such that equation \eqref{eqgraph} admits a weak solution 
$u\in W^{1,\infty}([0,T],L^{\infty}([0,1]))\cap L^\infty([0,T],W^{2,\infty}([0,1]))$. 
\end{theorem}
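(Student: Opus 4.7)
I plan to approximate $g$ by a sequence of smooth functions $g_n$ (obtained by convolution in $x$, so that each $g_n=g_n(x)$ and $\|g_n\|_{L^\infty}\le\|g\|_{L^\infty}$), apply Proposition \ref{progi} to produce smooth solutions $u_n$ of \eqref{eqgraph} with forcing $g_n$ on a common time interval $[0,T]$ whose length depends only on $\|u_0\|_{W^{1,\infty}}$ and $\|g\|_{L^\infty}$, and then pass to the limit. Proposition \ref{provo} already yields a subsequence along which $u_n\to u$ uniformly, with $u\in H^1([0,T],L^2([0,1]))\cap L^\infty([0,T],H^1([0,1]))$, so the remaining work is to upgrade this to the $W^{1,\infty}$-in-time and $W^{2,\infty}$-in-space regularity claimed in the theorem and to verify the weak formulation of Definition \ref{defw}.

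The key new ingredient, which exploits the hypothesis $g=g(x)$, is a uniform $L^\infty$ bound on $(u_n)_t$. Writing \eqref{eqgraph} as $u_t=F(x,u_x,u_{xx})$ with $F(x,p,q)=\tfrac{q}{1+p^2}+g_n(x)\sqrt{1+p^2}$, the absence of $u$-dependence in $F$ implies that $v_n:=(u_n)_t$ satisfies the \emph{linear} parabolic equation
\[
\partial_t v_n \;=\; \frac{1}{1+(u_n)_x^2}\,(v_n)_{xx}\;+\;\left(-\frac{2(u_n)_x(u_n)_{xx}}{(1+(u_n)_x^2)^2}+\frac{g_n(x)\,(u_n)_x}{\sqrt{1+(u_n)_x^2}}\right)(v_n)_x,
\]
which has no zeroth-order term. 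For each fixed $n$ the coefficients are smooth and the principal part is uniformly parabolic; under the periodic boundary conditions the usual maximum principle then gives $\|v_n\|_{L^\infty([0,1]\times[0,T])}\le\|v_n(\cdot,0)\|_{L^\infty}$, and the right-hand side
\[
\left\|\frac{u_0''}{1+(u_0')^2}+g_n(\cdot)\sqrt{1+(u_0')^2}\right\|_{L^\infty}
\]
is controlled independently of $n$ by $\|u_0\|_{W^{2,\infty}}$ and $\|g\|_{L^\infty}$.

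Once $(u_n)_t$ is uniformly bounded in $L^\infty$, I would solve algebraically for $(u_n)_{xx}$ in \eqref{eqgraph} and combine this with the uniform $L^\infty$ bound on $(u_n)_x$ supplied by Proposition \ref{progi} to obtain a uniform $L^\infty$ bound on $(u_n)_{xx}$. This places $u_n$ uniformly in $W^{1,\infty}([0,T],L^\infty([0,1]))\cap L^\infty([0,T],W^{2,\infty}([0,1]))$. A standard interpolation (comparing $(u_n)_x(x,t)-(u_n)_x(x,s)$ with the finite-difference quotient of $u_n$ of step $h\asymp\sqrt{|t-s|}$, using the Lipschitz-in-$t$ bound together with the bound on $(u_n)_{xx}$) then shows that $(u_n)_x$ is uniformly Hölder continuous in $(x,t)$, so Arzelà--Ascoli extracts a subsequence along which $(u_n)_x\to u_x$ uniformly; meanwhile $(u_n)_t\rightharpoonup^* u_t$ weakly-$*$ in $L^\infty$ and $g_n\to g$ strongly in $L^1$.

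The final step is to rewrite \eqref{eqgraph} in divergence form
\[
(u_n)_t\;=\;\bigl(\arctan((u_n)_x)\bigr)_x\;+\;g_n(x)\sqrt{1+(u_n)_x^2},
\]
test against a periodic $\varphi\in C^1_c([0,1]\times(0,T))$, integrate by parts, and pass to the limit. The $(u_n)_t$-term passes by weak-$*$ convergence, $\arctan((u_n)_x)\varphi_x$ converges uniformly, and for the product $g_n\sqrt{1+(u_n)_x^2}\varphi$ I would split the increment as
\[
(g_n-g)\sqrt{1+(u_n)_x^2}\,\varphi\;+\;g\bigl(\sqrt{1+(u_n)_x^2}-\sqrt{1+u_x^2}\bigr)\varphi,
\]
the first summand vanishing in the limit by the $L^1$ convergence of $g_n$ against the uniformly $L^\infty$-bounded factor $\sqrt{1+(u_n)_x^2}\varphi$, and the second by dominated convergence. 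I expect the main obstacle to be the maximum-principle step: it is precisely here that the hypothesis $g=g(x)$ is essential, because a $u$-dependent forcing would produce in the equation for $v_n$ a zeroth-order term involving $\partial_u g$, which cannot be controlled under a mere $L^\infty$ assumption on $g$.
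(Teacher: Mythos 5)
Your core idea is the right one and coincides with the paper's: because $g=g(x)$, differentiating \eqref{eqgraph} in time produces a linear parabolic equation for $v_n=(u_n)_t$ with no zeroth-order term, and the maximum principle gives $\|(u_n)_t(\cdot,t)\|_\infty\le\|(u_n)_t(\cdot,0)\|_\infty$, which is controlled by $\|u_0\|_{W^{2,\infty}}$ and $\|g\|_\infty$. The paper's computation \eqref{uut} is exactly this argument, carried out for $M_n(t)=\sup_x(u_n)_t^2/2$. Your diagnosis that a $u$-dependent forcing would spoil the sign of the zeroth-order term is also correct.

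There is, however, a genuine gap in the next step. You invoke ``the uniform $L^\infty$ bound on $(u_n)_x$ supplied by Proposition \ref{progi}'' in order to solve the equation for $(u_n)_{xx}$. Proposition \ref{progi} does not supply such a bound: its explicit uniform estimate is only $\|u_n(\cdot,t)\|_{H^1([0,1])}\le K$, and the $L^\infty$ bound on $u_x$ asserted in its proof is qualitative (valid for each fixed smooth $g_n$) rather than quantitative in terms of $\|u_0\|_{W^{1,\infty}}$ and $\|g\|_\infty$ alone. This matters, since without $\|(u_n)_x\|_\infty$ uniformly bounded you cannot conclude $\|(u_n)_{xx}\|_\infty\le C$ from the equation, and your subsequent interpolation argument for Hölder continuity of $(u_n)_x$ in turn uses the bound on $(u_n)_{xx}$ — so the chain is circular as written.

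The paper closes this gap in a different order: it first notes that $(\arctan(u_n)_x)_x=(u_n)_t-g_n\sqrt{1+(u_n)_x^2}$ and, using the uniform bound on $(u_n)_t$ plus \eqref{utx}, shows that $\arctan((u_n)_x)$ is bounded uniformly in $H^1([0,T],L^2)\cap L^\infty([0,T],H^1)$. Lemma \ref{lemreg} then gives a uniform $C^{1/2,1/4}$ modulus for $\arctan((u_n)_x)$, which already suffices for the a.e.\ convergence $(u_n)_x\to u_x$ and for passing to the limit in the weak formulation. Only afterward is the $W^{2,\infty}$ regularity obtained, by reducing $T$ so that, starting from $\sup_x|\arctan(u_0')|<\pi/2$, the uniform Hölder modulus keeps $|\arctan(u_x)|$ strictly below $\pi/2$ on $[0,T]$; this finally produces the $L^\infty$ bound on $u_x$ you wanted, and then \eqref{eqgraph} gives $u_{xx}\in L^\infty$. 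If you replace your appeal to Proposition \ref{progi} with this time-reduction argument via $\arctan(u_x)$, your proposal becomes correct and is essentially the paper's proof.
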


\begin{proof}
Let $g_n\in C^\infty([0,1] )$ be a sequence of smooth functions 
which converge to $g$ weakly* in $L^\infty([0,1] )$. 
By Propositions \ref{progi} and \ref{provo} there exist $T>0$, depending only on $\|u_0\|_{H^{1}}$ and $\|g\|_{L^\infty}$,
and smooth solutions $u_n$ of \eqref{eqgraph} which converge, up to a subsequence, to a limit function 
$u$ in uniformly and in the weak topology of $H^1([0,T],L^2([0,1]))\cap L^\infty([0,T],H^1([0,1]))$.

Let us prove that $u$ is a weak solution of \eqref{eqgraph}. The main point is showing that 
${u_n}_x$ converge to $u_x$ almost everywhere, so that we can pass to the limit in \eqref{eqdefw}.
We compute
\begin{eqnarray}\label{uut}
\nonumber 
\partial_t \frac{u_t^2}{2} &=& u_t\, u_{tt} = u_t\left( \frac{u_{xx}}{1+u_x^2} + g(x)\sqrt{1+u_x^2}\right)_t 
\\
&=& \frac{u_t\,u_{txx}}{1+u_x^2} - 2 \frac{u_x\,u_{xx}}{(1+u_x^2)^2}\left( \frac{u_t^2}{2} \right)_x
+ g \frac{u_x}{\sqrt{1+u_x^2}}\left( \frac{u_t^2}{2} \right)_x
\\
\nonumber 
&\le& \frac{1}{1+u_x^2}\left( \frac{u_t^2}{2} \right)_{xx} 
+\left(g \frac{u_x}{\sqrt{1+u_x^2}}- 2 \frac{u_x\,u_{xx}}{(1+u_x^2)^2}\right)\left( \frac{u_t^2}{2} \right)_x.
\end{eqnarray}
In particular, applying the same computation as \eqref{uut} to $u_n$,  we obtain that $\| {u_n}_t\|_\infty$ is decreasing in time. 
Indeed if $ M_n(t) = \sup_{x\in [0,1]} {u_n}_t^2/2$,  \eqref{uut} gives that $M_n'(t) \le 0$.
Therefore 
$u\in W^{1,\infty}([0,T],L^{\infty}([0,1]))$. Moreover, since $g$ depends only on $ x$ we have
\begin{eqnarray*}
\partial_t \int_0^1 \frac{u_t^2}{2}\,dx &=& \int_0^1u_t\,u_{tt}\,dx = \int_0^1 -\arctan(u_x)_t\,u_{xt} + g\left(\sqrt{1+u_x^2}\right)_tu_t\,dx
\\
&\le& \int_0^1 -\frac{u_{xt}^2}{1+u_x^2} + g u_x  u_t\frac{u_{xt}}{\sqrt{1+u_x^2}}\,dx
\\
&\le&\int_0^1 -\frac{u_{xt}^2}{1+u_x^2}+\frac{ g^2u_x^2  u_t^2}{2}\,dx
\\
&\leq& \frac{1}{2}\|g\|^2_\infty \|u_t\|^2_\infty \|u_x\|_{L^2}^2= C
\end{eqnarray*}
where the constant $C>0$ depends only on $u_0$ and $\|g\|_\infty$. We then get 
\begin{eqnarray}\nonumber 
&&\int_{[0,1]} \left( \arctan(u_x)\right)_x^2 
= \int_{[0,1]} \left(u_t - g\sqrt{1+u_x^2}\right)^2 \,dx
\le C \qquad \forall t\in [0,T]
\\ \label{utx}
&&\int_{[0,1]\times [0,T]} \left( \arctan(u_x)\right)_t^2\,dx\,dt 
=
\int_{[0,1]\times [0,T]} \frac{u_{xt}^2}{1+u_x^2} \,dx\,dt\le C. 
\end{eqnarray}
As a consequence, the function $\arctan({u_n}_x)$ is uniformly bounded in $H^1([0,T],L^2([0,T]))\cap L^\infty([0,T],H^1([0,1]))$. 
Therefore, the sequence $\arctan({u_n}_x)$ converges, up to a subsequence, to $\arctan({u}_x)$ uniformly on $[0,1]\times [0,T]$. 
Since $\arctan$ is injective this implies that the sequence ${u_n}_x$ converges to $u_x$ a.e. on $[0,1]\times [0,T]$,
and we can pass to the limit in \eqref{eqdefw}, obtaining that $u$ is a weak solution of \eqref{eqgraph}.

Finally, being $\arctan({u}_x)$ continuous, possibly reducing $T$ we have that $u_x$ is also continuous 
(hence bounded) on $[0,1]\times [0,T]$.
In particular, recalling \eqref{eqgraph} the uniform bound on $u_t$ implies an analogous bound on $u_{xx}$, that is 
$u\in L^\infty([0,T],W^{2,\infty}([0,1]))$.
\end{proof}

\begin{remark}\label{remgi}\rm
If $u_0$ is only in $H^1([0,1])$, since the sequence $u_n$ is uniformly bounded in $H^1([0,T],L^2([0,T]))$,
reasoning as in Theorem \ref{teogi} we get 
$u\in W^{1,\infty}_{\rm loc}((0,T],L^{\infty}([0,1]))\cap L^\infty_{\rm loc}((0,T],W^{2,\infty}([0,1]))$. 
\end{remark}

We conclude the section with a comparison and uniqueness result for solutions to \eqref{eqgraph}.

\begin{theorem}\label{teocfr}
Let $g( x,y)=g( x)$, with $g\in L^\infty([0,1])$, 
and let $u_1,u_2$
be two solutions to \eqref{eqgraph} such that $u_1(x,0)\le u_2(x,0)$ for all $x\in [0,1]$.
Then
\[
u_1\le u_2 \qquad \textrm{on } [0,1]\times [0,T].
\]
In particular, there is a unique solution to \eqref{eqgraph}, 
given an initial datum $u_0\in W^{2,\infty}([0,1])$.
\end{theorem}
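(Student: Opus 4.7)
The plan is to exploit the fact that $g$ depends only on $x$, so that subtracting the two equations produces a linear parabolic PDE for the difference $v:=u_1-u_2$ with bounded coefficients, to which a standard $L^2$ energy estimate on $v^+$ applies. Since both solutions lie in $W^{1,\infty}([0,T],L^\infty([0,1]))\cap L^\infty([0,T],W^{2,\infty}([0,1]))$, equation \eqref{eqgraph} holds pointwise almost everywhere on $[0,1]\times[0,T]$ for each $u_i$, and the subtraction is legitimate.

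Concretely, I would rewrite \eqref{eqgraph} in divergence form as $u_t=(\arctan u_x)_x + g(x)\sqrt{1+u_x^2}$ and use the mean value identities
\begin{equation*}
\arctan u_{1,x} - \arctan u_{2,x} = a(x,t)\,v_x, \qquad \sqrt{1+u_{1,x}^2}-\sqrt{1+u_{2,x}^2} = b(x,t)\,v_x,
\end{equation*}
where $a(x,t):=\int_0^1 (1+(\sigma u_{1,x}+(1-\sigma)u_{2,x})^2)^{-1}\,d\sigma$ and $b$ is defined analogously from $\xi\mapsto\sqrt{1+\xi^2}$, so that $|b|\le 1$. This produces the linear equation $v_t=(a\,v_x)_x + g(x)\,b(x,t)\,v_x$; the bound $\|u_{i,x}\|_{L^\infty}\le M$ coming from Theorem \ref{teogi} yields the uniform coercivity $a\ge c_0:=(1+M^2)^{-1}>0$ and bounded drift $\|g\,b\|_\infty\le \|g\|_\infty$.

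Next I would multiply by $v^+$ and integrate over $[0,1]$. The $W^{1,\infty}$ time regularity gives $\tfrac{d}{dt}\int_0^1 (v^+)^2/2\,dx = \int_0^1 v_t\,v^+\,dx$; integration by parts (periodic boundary conditions, together with the Stampacchia identity $(v^+)_x = v_x\,\mathbf{1}_{\{v>0\}}$) turns the diffusive term into $-\int_0^1 a\bigl((v^+)_x\bigr)^2\,dx$, while Young's inequality controls the drift contribution by
\begin{equation*}
\int_0^1 g\,b\,v_x\,v^+\,dx \le \tfrac{1}{2}\int_0^1 a\bigl((v^+)_x\bigr)^2\,dx + \frac{\|g\|_\infty^2}{2c_0}\int_0^1 (v^+)^2\,dx.
\end{equation*}
Absorbing the first piece into the dissipative term, I obtain $\tfrac{d}{dt}\int_0^1(v^+)^2\,dx \le (\|g\|_\infty^2/c_0)\int_0^1(v^+)^2\,dx$. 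Since $v^+(\cdot,0)\equiv 0$ by assumption, Gronwall's lemma gives $v^+\equiv 0$, i.e.\ $u_1\le u_2$ on $[0,1]\times[0,T]$, and uniqueness then follows by applying the comparison to both $(u_1,u_2)$ and $(u_2,u_1)$.

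The step I expect to be most delicate is the reliance on the hypothesis $g=g(x)$: it is precisely what guarantees that the difference of source terms contains only $v_x$ and no zeroth-order piece in $v$, so the linearization stays in pure divergence form with no $g_u$-coefficient to control. A $u$-dependent $g$ would introduce a term that, with $g$ only in $L^\infty$, one could not absorb into the energy, and the argument would break down in this generality. A secondary technical point is the use of the $W^{2,\infty}$ regularity from Theorem \ref{teogi} to secure uniform ellipticity of $a$; under the weaker $H^1$ framework of Remark \ref{remgi} the same estimate would have to be carried out first on intervals $[\delta,T]$ and then passed to the limit $\delta\to 0$.
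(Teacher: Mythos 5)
Your proof is correct but takes a genuinely different route from the paper. The paper argues pointwise: set $w=u_2-u_1$, $d(t)=\min_x w(x,t)$, show $d(0)>0$ after a $\delta$-shift, and at any minimum point $w_x=0$, whence the two nonlinear diffusion coefficients coincide \emph{and} the forcing terms $g(x)\sqrt{1+(u_i)_x^2}$ cancel exactly, leaving $w_t=w_{xx}/(1+(u_1)_x^2)\ge 0$; a Hamilton-type differentiation of the minimum then yields $\dot d\ge 0$. You instead linearize the divergence-form equation via mean-value integrals $a$ and $b$, obtain the linear drift-diffusion equation $v_t=(a\,v_x)_x+g\,b\,v_x$ for $v=u_1-u_2$, test against $v^+$ with the Stampacchia identity, and close with Young and Gronwall. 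Both arguments turn on the same structural fact you correctly single out — $g=g(x)$ makes the source difference a pure $v_x$-term with no zeroth-order piece — and both need the extra regularity from Theorem \ref{teogi} (you use $\|u_{i,x}\|_\infty\le M$ for uniform ellipticity of $a$; the paper uses $u_t\in L^2_tH^1_x$ to select times where $w_t$ is continuous and $w$ is twice differentiable at the minimum). The paper's route is shorter and needs no $L^\infty$ gradient bound at the diffusion step, since at a minimum the coefficients match automatically; your energy route is somewhat more robust (no need to differentiate the running minimum, no pointwise second-order touching argument) and, as you note, extends cleanly to the weaker framework of Remark \ref{remgi} by working on $[\delta,T]$. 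One small thing worth making explicit in your write-up: periodicity of $v^+$ needed in the integration by parts follows because $u_0(0)=u_0(1)$ combined with the boundary conditions forces $u_i(0,t)=u_i(1,t)$ for all $t$, hence $v(0,t)=v(1,t)$.
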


\begin{proof}
Let 
\[
d(t) := \min_{x\in [0,1]} u_2(x,t)-u_1(x,t).
\]
Possibly replacing $u_1(\cdot,0)$ with $u_1(\cdot,0)-\delta$, 
we can assume that $d(0)=\delta>0$. The thesis now follows if we can show that 
$d(t)\ge \delta$ for all $t\in [0,T]$. 

{}From \eqref{utx} it follows 
\[
u_t\in L^2([0,T],H^1([0,1]))\hookrightarrow
L^2([0,T],C^{\alpha}([0,1]))
\]
for all $\alpha<1/2$. Let $w=u_2-u_1$, and choose $t\in [0,T]$ such that $w_t(\cdot,t)\in C^{\alpha}([0,1])$.
Then, for all $x\in [0,1]$ such that $d(t)=w(x,t)$, $w$ is twice differentiable at $x$ 
and we have
\begin{eqnarray}
\nonumber 
w_x &=& (u_2)_{ x}-(u_1)_{ x}= 0\,,
\\ \label{eqww}
w_t &=& \left(
\frac{(u_2)_{ x x}}{1+(u_2)_{ x}^2} - \frac{(u_1)_{ x x}}{1+(u_1)_{ x}^2}
+ g( x) \left( \sqrt{1+(u_2)_{ x}^2}-\sqrt{1+(u_1)_{ x}^2}\right)
\right) 
\\ \nonumber 
&=& \left(
\frac{(u_2)_{ x x}}{1+(u_2)_{ x}^2} - \frac{(u_1)_{ x x}}{1+(u_1)_{ x}^2}
\right)
\\ \nonumber 
&=& \frac{w_{xx}}{ 1+(u_1)_{x}^2} \ge 0\,.
\end{eqnarray}
By \eqref{eqww}, for almost every $t\in [0,T]$ we get
\begin{eqnarray*}
\dot d(t) = \min_{x:\ d(t) = w(x,t)} w_t(x,t) \ge 0
\end{eqnarray*}
which gives the thesis.
\end{proof}

\section{A homogenization problem}\label{secomo}
Given a smooth function $g$ which is periodic on $[0,1]^2$, 
we consider the following homogenization problem 
\begin{equation}\label{eqgomo}  
u_t = \frac{u_{xx}}{1+u_{x}^2} + g\left(\frac{x}{\eps},\frac{u}{\eps}\right) \sqrt{1+u_{x}^2}\,
\end{equation} with initial data $u(x,0)=u_0(x)$.
We point out that existence of traveling wave solutions for \eqref{eqgomo} has been established in \cite{DKH:08},
whereas in \cite{LC:09} (see also \cite{CB:04}) the authors discuss the uniqueness of traveling waves 
and characterize the asymptotic speed in some particular case.

By Proposition \ref{progi} there exists $T>0$ independent of $\eps$ and a family of smooth solution 
$u_\eps$ of \eqref{eqgomo}, which are uniformly bounded in $H^1([0,T],L^2([0,1]))\cap L^\infty([0,T],H^1([0,1]))$.
In particular, as in Propositon \ref{provo}, we can pass to the limit, up to a subsequence as $\eps\to 0$, 
and obtain that $u_\eps$ converge uniformly on $[0,1]\times[0,T]$ 
to a limit function $u\in H^1([0,T],L^2([0,1]))\cap L^\infty([0,T],H^1([0,1]))$.

Assume now $u_0\in W^{1,\infty}([0,1])$, with Lipschitz constant $L>0$.
Due to the comparison principle and the periodicity of $g$, for all $N\in\mathbb N$ we have the estimate
\begin{equation}\label{eqnep}
\vert u_\eps(x,t)-u_\eps(x+N\eps,t)\vert \le ([ L] +1 )N\eps,
\end{equation}
where $[L]$ denotes the integer part of $L$.
Passing to the limit in \eqref{eqnep} as $\eps\to 0$, we get
\[
\vert u(x,t)-u(y,t)\vert \le L |y-x|,
\]
that is the norm $\| u(\cdot,t)\|_{W^{1,\infty}}$ is non increasing in $t$. 
We expect this bound to be still true for the approximating sequence $u_\eps$,
which would imply that we can take $T=+\infty$.

Finally, when $g$ depends only on $x$, by Theorem \ref{teogi} we 
have the following result.

\begin{theorem}\label{teomo}
Let $g( x,y)=g( x)$, with $g\in L^\infty([0,1])$, 
and let $u_0\in W^{2,\infty}([0,1])$ with periodic boundary conditions. 
Then, there exists $T>0$ depending only on $u_0$ and $\| g\|_\infty$ 
such that the solutions $u_\eps$ to \eqref{eqgomo} converge
in $W^{1,\infty}([0,T],L^{\infty}([0,1]))\cap L^\infty([0,T],W^{2,\infty}([0,1]))$, as $\eps\to 0$, 
to the (unique) solution $u$ of
\begin{equation}\label{eqglim}
u_t = \frac{u_{xx}}{1+u_{x}^2} + \left(\int_0^1 g(x)\,dx\right) \sqrt{1+u_{x}^2}\,.
\end{equation}
In particular, $u\in C^\infty([0,1]\times (0,T])$.
\end{theorem}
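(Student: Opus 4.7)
The plan is to apply Theorem \ref{teogi} uniformly in $\eps$ to extract a convergent subsequence of $u_\eps$, and then identify the limit via the weak formulation combined with the classical weak-$*$ homogenization of the oscillating coefficient $g(\cdot/\eps)$.

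First, I would set $g_\eps(x):=g(x/\eps)$, so that $\|g_\eps\|_{L^\infty}=\|g\|_{L^\infty}$ for all $\eps>0$. Since the forcing is independent of $u$, Theorem \ref{teogi} yields a time $T>0$ depending only on $u_0$ and $\|g\|_{L^\infty}$ (hence not on $\eps$), together with a weak solution $u_\eps$ of \eqref{eqgomo} in $W^{1,\infty}([0,T],L^\infty([0,1]))\cap L^\infty([0,T],W^{2,\infty}([0,1]))$ whose norm is uniformly bounded in $\eps$. Tracing the estimate \eqref{utx} through the proof of that theorem, I also obtain that $\arctan((u_\eps)_x)$ is uniformly bounded in $H^1([0,T],L^2([0,1]))\cap L^\infty([0,T],H^1([0,1]))$.

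Next, by Banach--Alaoglu I extract a subsequence along which $u_\eps$ converges weakly-$*$ in the intersection space to some $u$, and by Lemma \ref{lemreg} the convergence is in fact uniform on $[0,1]\times[0,T]$. Reasoning as in the final part of the proof of Theorem \ref{teogi}, the uniform $H^1$-bound forces uniform convergence of $\arctan((u_\eps)_x)$ to a limit, which must be $\arctan(u_x)$; injectivity of $\arctan$ then yields $(u_\eps)_x\to u_x$ a.e.\ on $[0,1]\times[0,T]$. Combined with the uniform $L^\infty$-bound on $(u_\eps)_x$, dominated convergence provides strong convergence of $\sqrt{1+(u_\eps)_x^2}$ in every $L^p$ with $p<\infty$.

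The third step is to pass to the limit in the weak formulation \eqref{eqdefw} written for $u_\eps$. The terms involving $(u_\eps)_t\varphi$ and $\arctan((u_\eps)_x)\varphi_x$ pass to the limit by weak and uniform convergence, respectively. For the nonlinear term $g(x/\eps)\sqrt{1+(u_\eps)_x^2}\,\varphi$, I combine the standard periodic homogenization fact $g(\cdot/\eps)\to \bar g:=\int_0^1 g(y)\,dy$ weakly-$*$ in $L^\infty([0,1])$ (and hence in $L^\infty([0,1]\times[0,T])$, as $g_\eps$ is independent of $t$) with the strong $L^1$-convergence of $\sqrt{1+(u_\eps)_x^2}\,\varphi$ obtained above, so that the product converges to $\bar g\sqrt{1+u_x^2}\,\varphi$ in the integral sense. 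The limit $u$ is thus a weak solution of \eqref{eqglim}; uniqueness from Theorem \ref{teocfr} applied to \eqref{eqglim} identifies the limit and shows that the whole family $u_\eps$ converges. Finally, \eqref{eqglim} is a smooth quasilinear uniformly parabolic equation whose coefficients depend only on $u_x\in L^\infty$, so standard parabolic bootstrap gives $u\in C^\infty([0,1]\times(0,T])$. The main technical point is precisely the combination of the weak-$*$ convergence of $g(\cdot/\eps)$ with strong convergence of the nonlinear gradient term, which hinges on the $H^1$-bound in \eqref{utx}; this is exactly the place where the hypothesis that $g$ is independent of $y$ is essential, as only then the argument of Theorem \ref{teogi} yields an estimate on $\|u_t\|_\infty$ strong enough to control $(\arctan(u_x))_t$.
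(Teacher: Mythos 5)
Your argument is correct and follows essentially the same route as the paper: the paper simply invokes Theorem \ref{teogi} with the sequence $g_n=g(\cdot/\eps_n)$ (using that $g(\cdot/\eps)\rightharpoonup^* \int_0^1 g$ weakly-$*$ in $L^\infty$) together with the uniqueness from Theorem \ref{teocfr}, and you have spelled out the same uniform bounds, compactness, a.e.\ gradient convergence, and weak-strong product convergence that underlie that proof.
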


In the generale case, we can determine the limit equation satisfied by $u$ 
only in a very specific case, that is when $u_0$ is a linear function. 
Indeed, by \cite{DKH:08} (see also \cite{LC:09}) for all $\alpha\in\R$ there exist
global smooth solutions $\hat u_{\alpha,\eps}$ of \eqref{eqgomo}, with average slope $\alpha$, 
which are either  {\em stationary waves}, that is  
\[\hat u_{\alpha,\eps}(x,t)=\hat u_{\alpha,\eps}(x ,0)\qquad \forall \ (x,t)\in \R^2,
\]   
or  {\em pulsating waves}, that is there exist $T>0$ and a vector $(v_1,v_2)\in\mathbb Z^2$,
depending on $(\alpha,\eps)$ and such that 
\[ 
\hat u_{\alpha,\eps}\left(x+ \eps v_1 ,t+ \eps T\right)
=\hat u_{\alpha,\eps}(x ,t)+ \eps v_2
\qquad \forall \ (x,t)\in \R^2.
\]  
We let
\[
c(\alpha,\eps)=\frac{(v_1,v_2)\cdot\nu_\alpha}{T}
\qquad {\rm where}\quad 
\nu_\alpha=\left( -\frac{\alpha}{\sqrt{1+\alpha^2}},\frac{1}{\sqrt{1+\alpha^2}}\right)
\]
be the velocity of the wave in the normal direction $\nu_\alpha$,
and we set $c(\alpha,\eps)= 0$ if $\hat u_{\alpha,\eps}$ is a standing wave.
In  particular, in \cite[Section 4]{DKH:08} it is shown that $\hat u_\eps$ can be represented as
\[
\hat u_{\alpha,\eps}(x,t) = \alpha x + c(\alpha,\eps)\sqrt{1+\alpha^2}\,t + \omega(\eps)
\qquad \forall \ (x,t)\in [0,1]\times [0,T],
\] 
with $\omega(\eps)\to 0$  as $\eps\to 0$.
Integrating \eqref{eqgomo} on $[0,1]$ and reasoning as in \cite[Section 4]{CB:04}, we obtain
\[
\lim_{\eps\to 0} c(\alpha,\eps) = c(\alpha):= 
\left\{\begin{array}{ll}
0 & \textrm{if $G(s)=0$ for some $s\in [0,1]$}
\\
\left( \int_0^1\frac{1}{G(s)}\,ds\right)^{-1}
& \textrm{otherwise,}
\end{array}\right.
\]
where 
\[
G(s) := \lim_{L\to\infty}\frac{1}{L}\int_0^L g(x,\alpha x+s)\,dx.
\]
Notice that $c(\alpha)=\int_{[0,1]^2}g$ for all $\alpha\not\in\mathbb Q$.
Using the comparison principle for solutions to \eqref{eqgomo},
we can use $\hat u_{\alpha,\eps}$ as barriers for the solutions $u_\eps$ 
starting from $u_0=\alpha x$ and obtain that 
\[
\lim_{\eps\to 0} u_\eps(x,t)=u(x,t)=\alpha x + c(\alpha)\sqrt{1+\alpha^2}\,t, 
\]
for all $(x,t)\in [0,1]\times [0,T]$. Notice that the function $\alpha\to c(\alpha)$ 
is in general not continuous.

\subsection{Asymptotic analysis of the limit problem}\label{secomo2}

A more general framework to study the the homogenization problem \eqref{eqgomo} for $g$ depending on both variables    
 would be the level set method and the theory of viscosity solutions. 

We consider   functions $U_\eps:\R^2\times[0, +\infty)$ whose $0$ level sets coincide  with the graphs of the solutions $u_\eps$  to \eqref{eqgomo} by setting $\{U_\eps( x,y ,t)=0\}=\{u_\eps( x,t)=y\}$. These functions satisfy the  associated level set equation in $\R^2\times [0, +\infty)$    \begin{equation}\label{levelset}  
U_t^\eps    = \tr\left[\left(\mathbf{I}-\frac{DU^\eps \otimes DU^\eps }{|DU^\eps |^2}\right)D^2 U^\eps \right]+  g\left(\frac{ x}{\eps}, \frac{y}{\eps}\right)  |DU^\eps | 
\end{equation} 
with initial data $U(0, x,y) =U_0( x,y)$. 

The analysis of the asymptotic behaviour of $U_\eps$ as $\eps \to 0$ using viscosity solutions theory is based essentially  on 
two steps. First of all, we identify the limit or effective equation, solving appropriate ergodic problems (called \emph{cell problems}),  obtained roughly speaking by substituting to the function  $U_\eps$ in the equation \eqref{levelset} an asymptotic formal expansion in $\eps$.  Once that the limit problem has been 
defined, it is important to check that it satisfies a comparison principle for viscosity sub and supersolutions. 
The second step is to show that $U_\eps$
converge uniformly as $\eps\to 0$ to a function $U$, solution to the limit problem. This can be obtained using  
the so-called Barles-Perthame semilimits \cite{BP} and the perturbed test function method introduced in \cite{e}. 

For equation \eqref{levelset} only the first step can be at the moment carried out, since the
effective differential operator that we obtain is actually discontinuous and there is not a satisfactory viscosity theory for such problems.  
The only simple case in which the limit operator is continuous and satisfies comparison principle is under the assumption that $\int_{[0,1]^2} g(x,y)dxdy=0$. Nevertheless in that case, the perturbed test function method does not apply, since the first corrector in the 2-scal expansion of $U_\eps$ (see \eqref{exp}) is discontinuous. 

We consider the following formal expansion of the solution $U_\eps$ to \eqref{levelset}.
\begin{equation}\label{exp} U_\eps( x,y,t)= U( x,y,t)+\eps \chi_1\left(\frac{ x}{\eps}, \frac{y}{\eps}\right)+\eps^2 \chi_2\left(\frac{ x}{\eps}, \frac{y}{\eps}\right). 
\end{equation} 
For every $p\in\R^2$, we define the average of $g$ on the normal spaces to $p$ as follows: 
\[G_p(s)=\lim_{R\to +\infty} \frac{1}{\pi R^2}\int_{|z|\leq R, \ z\cdot p=0} g\left(s\frac{p}{|p|}+z\right)dz, \qquad s\in \R. \]
Note that  $\lim_{L\to +\infty}\frac{1}{L}\int_0^L G_p(s)ds= \int_{[0,1]^2} g(x,y)dxdy$. 

\paragraph{First cell problem.} 
For every $p\in \R^2\setminus\{0\}$,   
we define $ \overline{c}\left({p}/{|p|}\right)$ as the unique  constant such that 
the following cell problem  admits a possibly discontinuous bounded viscosity solution $\chi_1$: 
\begin{equation}\label{cell} G_p \left(y\cdot \frac{p}{|p|}\right) |p+D\chi_1|=  \overline{c}\left(\frac{p}{|p|}\right) |p|, \qquad y\in\R^2.  \end{equation}  
The explicit formula for this constant is 
\begin{equation}\label{heffective} \overline{c}\left(\frac{p}{|p|}\right):= \left\{\begin{array}{lll} 0 & & \text{ if  }  G_p(s)=0 \text{ for  some }s; \\
\\ 
\left(\lim_{L\to +\infty}\frac{1}{L}\int_0^L\frac{ds}{G_p(s)} \right)^{-1} & & \text{ if either } G_p(s)>0 \text{ or }G_p(s)<0 \ \forall s.
\end{array}\right. 
\end{equation} 
Indeed, when $G_p(s)=0$ for some $s\in\R$, if there exists a constant for which the problem has a viscosity solution, then this constant has
to be $0$. Moreover it is easy to show that the periodic, bounded discontinuous solution of the equation $D\chi_1=-p $ in the set $\R\setminus\{s\ | G_p(s)=0\}$ also solves \eqref{cell} in the viscosity sense.
   
\noindent If $G_p(s)\ne 0$ for every $s\in\R$, it is well known that 
there exists a unique constant $\overline{c}(p/|p|)$ for which the cell problem 
\eqref{cell} has a bounded continuous solution. 
The explicit representation of the constant 
can be obtained by integrating \eqref{cell} on $p^\perp$.

\begin{remark}\upshape Observe that $\overline{c}(p)=(\int_{[0,1]^2} g(x,y)dxdy)|p|$ for every $p$ such that $p\cdot q\neq 0$ for every $q\in \Z^2$,  i.e. $\overline{c}(p/|p|)$ is constant almost everywhere on $S^1$.   In particular,
if $\int_{[0,1]^2} g(x,y)dxdy=0$, 
then necessarily $\overline{c}(p) $ is constantly equal to $0$. 
However, in general the map $p\to \overline{c}(p/|p|)$ is not constant and not even continuous.  For example, if we consider the case of $g$ depending only on the first variable $g( x,y)=g( x)$, with $\int_0^1 g(x)dx\neq 0$, then $\overline{c}(p/|p|)=\int_0^1 g(x)dx$ for every $p\neq(0,1)$  and  $\overline{c}(0,1)=1/\int_0^1 g(x)dx$ if $g(x)\neq 0$ for every $x$, $ \overline{c}(0,1)=0$ otherwise. 
\end{remark} 

\paragraph{Second cell problem.}
The second corrector $\chi_2$ is defined as a continuous bounded viscosity solution to the cell problem:   
\[-\tr\left[\left(\mathbf{I}-\frac{p+D\chi_1\otimes  p+D\chi_1}{|p+D\chi_1|^2}\right)D^2\chi_2\right]=
G_p\left(y\cdot \frac{p}{|p|}\right)|p+D\chi_1| -g(y)| p+D\chi_1|  
\qquad y\in\R^2.\]
 
\paragraph{The limit equation.}
If we substitute in \eqref{levelset} the formal expansion \eqref{exp}, recalling the characterization of $\chi_1, \chi_2$ as solutions of appropriate cell problems, we  obtain  that the term $U$ in the expansion formally satisfies the equation  
\begin{equation}\label{levelseteff}  
U_t     = \tr\left[\left(\mathbf{I}-\frac{DU \otimes DU  }{|DU  |^2}\right)D^2 U \right]+  \overline{c}\left(\frac{DU}{|DU|}\right) |DU  | 
\end{equation} 
which is therefore the effective or limit equation for the homogenization problem \eqref{levelset}. 
Note that this equation is again the level set equation of curvature flow with forcing term $\overline{c}(\nu)$, 
which is in general not continuous. 



\begin{thebibliography}{1}
\bibitem{AL:86} U. Abresch, J. Langer.
\newblock{The normalized curve shortening flow and homothetic solutions.}
\newblock{\em J. Differential Geom. }{\bf 23} (1986), no. 2, 175--196. 

\bibitem{A:91}
S. Altschuler.
\newblock{Singularities of the curve shrinking flow for space curves}.
\newblock {\em J. Differential Geom. }{\bf 34}  (1991),   491--514. 

\bibitem{aubin0} 
T.~Aubin.
\newblock {\em Some {N}onlinear {P}roblems in {R}iemannian {G}eometry}.
\newblock Springer, New York (1998).

\bibitem{BP}
G. Barles, B. Perthame.
\newblock {Exit time problems in optimal control and vanishing viscosity method}.
\newblock {\em SIAM J. Control Optim.} {\bf 26} (1988), 1133--1148.

\bibitem{CLS:09}
P. Cardaliaguet, P.-L. Lions, P. E. Souganidis.
\newblock {A discussion about the homogenization of moving interfaces}.
\newblock {\em J. Math. Pures Appl.} {\bf 91} (2009), 339--363.

\bibitem{CB:04}
B. Craciun, K. Bhattacharya.
\newblock {Effective motion of a curvature-sensitive interface through a
              heterogeneous medium.} 
\newblock {\em Interfaces Free Bound.} {\bf 6} (2004), 151--173.

\bibitem{DKH:08}
N. Dirr, G. Karali, N. K. Yip.
\newblock  {Pulsating wave for mean curvature flow in inhomogeneous medium}.
\newblock {\em European J. Appl. Math.} {\bf 19} (2008), 661--699.

\bibitem{e} 
L. C. Evans.
\newblock {The perturbed test function method for viscosity solutions of nonlinear PDE}.   
\newblock {\em Proc. Roy. Soc. Edinburgh Sect. A } {\bf  111}  (1989),   359--375.

\bibitem{GH} M. Gage, R. S. Hamilton.
\newblock {The heat equation shrinking convex plane curves}.
\newblock {\em J. Differential Geom.} {\bf 23}  (1986), 69--96. 

\bibitem{Hu:90} 
G. Huisken.
\newblock {Asymptotic behavior for singularities of the mean curvature flow. }
\newblock {\em J. Differential Geom. } {\bf 31}  (1990),  no. 1, 285--299. 

\bibitem{Hu:98} 
G. Huisken.
\newblock {A distance comparison principle for evolving curves. }
\newblock {\em Asian J. Math. } {\bf 2}  (1998), 127–-133. 

\bibitem{HP:99}
G. Huisken, A. Polden. 
\newblock {Geometric evolution equations for hypersurfaces}.  
\newblock {\em Calculus of variations and
geometric evolution problems (Cetraro, 1996)}, Springer, Berlin (1999), 45--84.

\bibitem{LSU}
O. Ladyzenskaja, V. Solonnikov, N. Uralceva.
\newblock {\em Linear and quasilinear equations of parabolic type.}
\newblock {Translations of Mathematical Monographs} {\bf 23}, 
American Mathematical Society, Providence (1967).

\bibitem{LS:05} P.-L. Lions, P. E. Souganidis.
\newblock{Homogenization of degenerate second-order PDE in periodic and almost periodic environments and applications.}
\newblock{\em   Ann. Inst. H. Poincar\'e Anal. Non Lin\'eaire }{\bf  22 } (2005),  no. 5, 667--677. 
\bibitem{LC:09}
B. Lou, X. Chen.
\newblock {Traveling waves of a curvature flow in almost periodic media.}
\newblock {\em J. Differential Eqs.} {\bf 247} (2009), 2189-–2208.

\bibitem{Lu:95} 
A. Lunardi.
\newblock {\em Analytic Semigroups and Optimal Regularity in Parabolic Problems}.
\newblock {Progress in Nonlinear Differential Equations and their Applications} {\bf 16},
Birkh\"auser Verlag, Basel (1995).

\bibitem{Ni:59}
L. Nirenberg.
\newblock {On elliptic partial differential equations.}
\newblock {\em Ann. Sc. Norm. Super. Pisa Cl. Sci.} {\bf 13} (1959), 116--162.
\end{thebibliography}
\end{document}